\chardef\bslash=`\\ 
\newtheorem{thm}{Theorem}[section]
\newtheorem{cor}[thm]{Corollary}
\newtheorem{lem}[thm]{Lemma}
\newtheorem{prop}[thm]{Proposition}
\theoremstyle{definition}
\newtheorem{defn}[thm]{Definition}
\newtheorem{rem}[thm]{Remark}
\newtheorem{qtn}[thm]{Question}
\theoremstyle{remark}
\newcommand{\eval}[2][\right]{\relax
  \ifx#1\right\relax \left.\fi#2#1\rvert}
\begin{document}


\title{Many equiprojective polytopes}

\author[T. Buffi{\`e}re]{Th{\'e}ophile Buffi{\`e}re}
\address{Universit{\'e} Paris 13, Villetaneuse, France}
\email{buffiere@math.univ-paris13.fr}

\author[L. Pournin]{Lionel Pournin}
\address{Universit{\'e} Paris 13, Villetaneuse, France}
\email{lionel.pournin@univ-paris13.fr}

\begin{abstract}
A $3$-dimensional polytope $P$ is $k$-equiprojective when the projection of $P$ along any line that is not parallel to a facet of $P$ is a polygon with $k$ vertices. In 1968, Geoffrey Shephard asked for a description of all equiprojective polytopes. It has been shown recently that the number of combinatorial types of $k$-equiprojective polytopes is at least linear as a function of $k$. Here, it is shown that there are at least $k^{3k/2+o(k)}$ such combinatorial types as $k$ goes to infinity. This relies on the Goodman--Pollack lower bound on the number of order types and on new constructions of equiprojective polytopes via Minkowski sums.
\end{abstract}
\maketitle

\section{Introduction}\label{BP.sec.0}

In 1968, Geoffrey Shephard asked a number of questions related to the combinatorics of Euclidean polytopes---convex hulls of finitely many points from $\mathbb{R}^d$ \cite{Shephard1968a,Shephard1968b}. Among them, Question IX asks for a method to construct every \emph{equiprojective polytope}. A $3$-dimensional polytope $P$ is $k$-equiprojective when its orthogonal projection on any plane (except for the planes that are orthogonal to a facet of $P$) is a polygon with $k$ vertices. A straightforward example of equiprojective polytopes is provided by prisms: a prism over a polygon with $k-2$ vertices is $k$-equiprojective. Hallard Croft, Kenneth Falconer, and Richard Guy recall Shephard's question in their book about unsolved problems of geometry \cite[Problem B10]{CroftFalconerGuy1991}. While a practical criterion (see Theorem \ref{BP.sec.4.thm.1} to follow) and an algorithm to recognise equiprojective polytopes have been proposed by Masud Hasan and Anna Lubiw \cite{HasanLubiw2008}, the problem is still open.

Some equiprojective polytopes have been recently constructed by truncating Johnson solids ($3$-dimensional polytopes whose facets are regular polygons) and by gluing two well-chosen prisms along a facet \cite{HasanHossainLopez-OrtizNusratQuaderRahman2022}. The latter construction shows, in particular, that the number of different combinatorial types of $k$\nobreakdash-equiprojective polytopes is at least a linear function of $k$ (recall that two polytopes have the same combinatorial type when their face lattices are isomorphic). Here, this result is improved as follows.
\begin{thm}\label{BP.sec.0.thm.1}
There are at least
$$
k^{k\left(\frac{3}{2}+O\left(\frac{1}{\mathrm{log}\,k}\right)\right)}
$$
different combinatorial types of $k$-equiprojective polytopes.
\end{thm}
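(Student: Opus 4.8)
The plan is to realise the $k$-equiprojective polytopes as Minkowski sums of many small convex polygons sitting in generic planes of $\mathbb{R}^3$, and then to bound from below the number of combinatorial types so obtained by means of the Goodman--Pollack lower bound on the number of order types.

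First I would prove the basic construction: if $Q_1,\dots,Q_m$ are convex polygons lying in pairwise non-parallel planes and otherwise in general position in $\mathbb{R}^3$, then $P=Q_1+\dots+Q_m$ is a $3$-polytope that is $k$-equiprojective with $k=\sum_i|Q_i|$, where $|Q_i|$ denotes the number of vertices of $Q_i$. The argument rests on the identity $\pi_u(P)=\pi_u(Q_1)+\dots+\pi_u(Q_m)$ for the orthogonal projection $\pi_u$ along a direction $u$, together with the fact that the number of vertices of a Minkowski sum of coplanar convex polygons equals the number of distinct outward edge normals occurring among the summands; for generic $u$ this equals $\sum_i|Q_i|$. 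The crux is to check that whenever this number drops, $u$ is parallel to a facet of $P$, which is exactly the case excluded from the definition of equiprojectivity. Such a drop occurs only in two ways: either some $\pi_u(Q_i)$ degenerates to a segment, which forces $u$ into the plane of $Q_i$, and by general position $P$ has a facet that is a translate of $Q_i$, so $u$ is then parallel to a facet of $P$; or an edge of some $\pi_u(Q_i)$ and an edge of some $\pi_u(Q_j)$ come to carry the same outward normal, which forces $u$ into the plane spanned by the two corresponding edge directions, and by general position $P$ has a parallelogram facet with exactly that affine hull, so again $u$ is parallel to a facet of $P$. This verification can alternatively be organised through the Hasan--Lubiw criterion \thmref{BP.sec.4.thm.1}; the same scheme also works with the $Q_i$ replaced by triangular prisms, with the minor advantage that each summand is itself a $3$-polytope.

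Second, I would count combinatorial types. The combinatorial type of $P=Q_1+\dots+Q_m$ is encoded by the common refinement of the normal fans of the $Q_i$, so I would choose the $Q_i$---that is, their edge directions and the normals of their planes---so that the resulting cell decomposition of $S^2$ records the order type of an auxiliary configuration of $\Theta(m)$ points, and conversely so that this order type can be read back from the face lattice of $P$. Distinct order types then give polytopes $P$ of distinct combinatorial type, and the Goodman--Pollack lower bound on the number of order types furnishes $m^{\Omega(m)}$ such polytopes. Two technical points need care: that the order type is genuinely recoverable from the face lattice, so the correspondence is injective on combinatorial types; and that a configuration realising a prescribed order type can be perturbed into the general position required above without changing its order type, which is possible since general position is open and dense.

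Finally, each summand contributes a fixed number of vertices to every generic projection, so $k$ is proportional to the number $m$ of summands. Substituting $m=\Theta(k)$ into the bound $m^{\Omega(m)}$, and tracking both the proportionality constant and the change of base from $m$ to $k$, turns the exponent supplied by Goodman--Pollack into $\tfrac{3}{2}+O(1/\log k)$---the correction $O(1/\log k)$ absorbing both the change of base and the $o(1)$ loss in the Goodman--Pollack estimate---which is the bound asserted in \thmref{BP.sec.0.thm.1}. Values of $k$ not of the required form are reached by adjoining one further polygon of suitable size, which leaves the exponent in \thmref{BP.sec.0.thm.1} unchanged. I expect the main obstacle to be the first step: certifying that these Minkowski sums are genuinely equiprojective---that there is no ``unexpected'' projection direction, parallel to no facet, along which the projected polygon loses a vertex---requires the delicate general-position analysis sketched above. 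Establishing injectivity on combinatorial types in the second step is a secondary difficulty.
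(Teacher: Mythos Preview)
Your plan shares the paper's core strategy---build equiprojective polytopes as Minkowski sums and count combinatorial types via Goodman--Pollack---but it differs in the choice of summands, and that choice is where the exponent $3/2$ is won or lost. The paper takes the summands to be \emph{line segments}: a zonotope with $n$ generators is $2n$-equiprojective (Proposition~\ref{BP.sec.1.prop.1}), and the classical correspondence between combinatorial types of zonotopes and reorientation classes of realizable oriented matroids (Proposition~\ref{BP.sec.1.prop.2}) gives $z(n,3)\ge n^{3n(1+o(1))}$ straight from Theorem~\ref{BP.sec.1.thm.1}. With $n=k/2$ this is exactly $k^{(3/2)k(1+o(1))}$. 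For odd $k$ the paper adjoins a single generic triangle to a zonotope with $(k-3)/2$ generators and then spends all of Section~\ref{BP.sec.3} (Lemmas~\ref{BP.sec.3.lem.1}--\ref{BP.sec.3.lem.2}) proving that this does not collapse combinatorial types.

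Your proposal instead sums $m$ polygons with $q\ge 3$ edges each, so $m=k/q$. If the ``auxiliary configuration of $\Theta(m)$ points'' carries an order type in rank $3$, Goodman--Pollack supplies at most about $m^{4m}$ of them, and substituting $m=k/q$ gives roughly $k^{(4/q)k}$; already for $q=3$ this is $k^{(4/3)k}$, strictly below $k^{(3/2)k}$. To recover $3/2$ you would need to encode the oriented matroid of all $k$ edge directions and recover it from the face lattice, but those directions are far from generic---they come in coplanar $q$-tuples, each satisfying a linear relation---so they do not sweep out the full $k^{3k(1+o(1))}$ matroids that unconstrained vectors would. In short, the sentence ``tracking the proportionality constant \ldots\ turns the exponent into $\tfrac{3}{2}$'' hides a real loss unless your polygons degenerate to segments, at which point you are doing precisely what the paper does.

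Two further remarks. First, your equiprojectivity argument for generic sums of polygons is correct and is the paper's Theorem~\ref{BP.sec.0.thm.3}; the paper proves it through the aggregated-cone criterion (Theorem~\ref{BP.sec.0.thm.2}) rather than by direct analysis of degenerate projection directions, which is cleaner. Second, the step you call a ``secondary difficulty''---reading the order type back from the face lattice---is in fact the main technical content: for zonotopes it is the off-the-shelf Proposition~\ref{BP.sec.1.prop.2}, and for the zonotope-plus-triangle case it requires the explicit face-lattice analysis of Section~\ref{BP.sec.3}. Your sketch does not indicate how you would carry this out for sums of polygons.
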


Our proof of Theorem \ref{BP.sec.0.thm.1} is split into two cases depending on the parity of~$k$. Recall that a zonotope $Z$ is a Minkowski sum of line segments. It can be assumed without loss of generality that these segments are pairwise non-collinear, in which case their number is the \emph{number of generators of $Z$}. As observed in~\cite{HasanLubiw2008}, a $3$\nobreakdash-dimensional zonotope with $n$ generators is a $2n$-equiprojective polytope. When $k$ is even, this observation is the first ingredient in the proof of Theorem \ref{BP.sec.0.thm.1}. The second ingredient is the following estimate for the number of combinatorial types of zonotopes that may be of independent interest.
\begin{thm}\label{BP.sec.1.thm.2}
The number $z(n,d)$ of combinatorial types of $d$-dimensional zonotopes with $n$ generators satisfies
$$
n^{(d^2-2d)n\left(1+O\left(\frac{1}{\mathrm{log}\,n}\right)\right)}\leq{z(n,d)}\leq{n^{(d-1)^2n\left(1+O\left(\frac{\mathrm{log}\,\mathrm{log}\,n}{\mathrm{log}\,n}\right)\right)}}
$$
when $d$ is fixed and $n$ goes to infinity.
\end{thm}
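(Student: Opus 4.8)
The plan is to reinterpret $z(n,d)$ as a count of oriented matroids taken up to relabeling and reorientation, and then to sandwich it between $t(n,d)$ and $t(n,d)/(2^{n}n!)$ using \thmref{BP.sec.1.thm.1}. Fix a $d$-dimensional zonotope $Z$ with $n$ generators and one of its generator sets $\mathcal{G}_{0}$. Since every generator set of $Z$ is obtained from $\mathcal{G}_{0}$ by negating some of its vectors, Proposition~\ref{BP.sec.1.prop.2} shows that the collection of all the oriented matroids of all the generator sets of $Z$ is exactly the orbit of a single oriented matroid $M_{\sigma}(\mathcal{G}_{0})$ under the group $B_{n}$ of signed permutations acting on the coordinates of $\{-1,0,1\}^{n}$ (the permutations implementing a change of the ordering $\sigma$, and the sign changes implementing a change of the generator set). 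Hence $z(n,d)$ is precisely the number of $B_{n}$-orbits of oriented matroids of sets of $n$ pairwise non-collinear vectors that span $\mathbb{R}^{d}$.

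For the upper bound, I claim each such orbit meets the set of oriented matroids counted by $t(n,d)$. Indeed, if $M=M_{\sigma}(\mathcal{G})$ with $\mathcal{G}$ as above, the orthogonal complements of the vectors of $\mathcal{G}$ are finitely many hyperplanes through the origin, which do not cover $\mathbb{R}^{d}$, so there is a vector $w$ with $g\cdot w\neq0$ for every $g\in\mathcal{G}$. Negating each $g$ for which $g\cdot w<0$ (a reorientation of $M$) and then applying a linear automorphism of $\mathbb{R}^{d}$ sending $w$ to the last basis vector (which leaves the oriented matroid unchanged) turns $\mathcal{G}$ into a set of $n$ pairwise non-collinear vectors that span $\mathbb{R}^{d}$ and have positive last coordinate; its oriented matroid is therefore counted by $t(n,d)$ and, by construction, lies in the $B_{n}$-orbit of $M$. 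Choosing one such oriented matroid in each orbit injects the set of orbits into the oriented matroids counted by $t(n,d)$, so $z(n,d)\leq t(n,d)$. Plugging in the upper bound of \thmref{BP.sec.1.thm.1} and absorbing the factor $(d-1)^{-(d-1)^{2}n(1+o(1))}=n^{O(n/\log n)}$ (legitimate because $d$ is fixed) then yields $z(n,d)\leq n^{(d-1)^{2}n(1+O(\log\log n/\log n))}$.

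For the lower bound, I would use that the Goodman--Pollack construction underlying the lower bound of \thmref{BP.sec.1.thm.1} (see \cite[Section~5]{GoodmanPollack1986}) produces configurations of $n$ points in general position in $\mathbb{R}^{d-1}$; write $g(n,d)$ for the number of oriented matroids it produces, so that $g(n,d)\geq(n/(d-1))^{(d-1)^{2}n(1+O(\log(d-1)/\log n))}$. Given such a configuration $p_{1},\dots,p_{n}$, the vectors $(p_{1},1),\dots,(p_{n},1)$ span $\mathbb{R}^{d}$ and are pairwise non-collinear (collinearity would force two of the $p_{i}$ to coincide), so the oriented matroid of $p_{1},\dots,p_{n}$ is realized by a generator set of a $d$-dimensional zonotope with $n$ generators, and its $B_{n}$-orbit is counted by $z(n,d)$. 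Since a $B_{n}$-orbit has at most $2^{n}n!$ elements, these $g(n,d)$ distinct oriented matroids meet at least $g(n,d)/(2^{n}n!)$ distinct orbits, whence $z(n,d)\geq g(n,d)/(2^{n}n!)$. As $2^{n}n!=n^{n(1+O(1/\log n))}$ by Stirling's formula and $g(n,d)\geq n^{(d-1)^{2}n(1+O(1/\log n))}$ for fixed $d$, this gives $z(n,d)\geq n^{((d-1)^{2}-1)n(1+O(1/\log n))}=n^{(d^{2}-2d)n(1+O(1/\log n))}$, as desired.

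The step I expect to require the most care is the interplay between the labeled quantity $t(n,d)$ and the relabeled, reoriented quantity $z(n,d)$; in particular, the lower bound genuinely needs the Goodman--Pollack extremal configurations to be non-degenerate. One cannot instead simply discard the oriented matroids counted by $t(n,d)$ that have two parallel elements: their number is at most $\binom{n}{2}t(n-1,d)$, and showing this is $o(t(n,d))$ would require a handle on the ratio $t(n,d)/t(n-1,d)$ that the stated asymptotics do not supply, since the error terms in the exponent of $t(n,d)$, of magnitude up to $O(n\log\log n)$, far exceed the main term $\Theta(\log n)$ of $\log t(n,d)-\log t(n-1,d)$.
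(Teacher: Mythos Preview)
Your proof is correct and follows essentially the same strategy as the paper: establish $t(n,d)/(2^{n}n!)\leq z(n,d)\leq t(n,d)$ via Proposition~\ref{BP.sec.1.prop.2} and the $B_{n}$-orbit structure on generator sets, then invoke Theorem~\ref{BP.sec.1.thm.1} together with Stirling's approximation. The only difference is that the paper uses $t(n,d)$ directly in the lower bound---implicitly treating the realizing vectors as pairwise non-collinear, since order types of $n$ distinct points lift to vectors with last coordinate $1$---whereas you route through the Goodman--Pollack count $g(n,d)$ to make this non-collinearity explicit.
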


We establish Theorem \ref{BP.sec.1.thm.2} as a consequence of the Goodman--Pollack bound the number of order types \cite{GoodmanPollack1986} and its refinement by Noga Alon \cite{Alon1986}.

When $k$ is odd, Theorem \ref{BP.sec.0.thm.1} requires a different construction that uses a Minkowski sum with a triangle. In order to analyse how equiprojectivity behaves under Minkowski sums, we rely on the notion of an \emph{aggregated cone} of a polytope at one of its edge directions. Roughly, an edge direction of a polytope $P$ contained in $\mathbb{R}^3$ is a vector $u$ in the unit sphere $\mathbb{S}^2$ parallel to an edge of $P$. The aggregated cone $C_P(u)$ of $P$ at $u$ is the union of the $2$-dimensional normal cones of $P$ contained in the plane $u^\perp$ through the origin of $\mathbb{R}^3$ and orthogonal to $u$. We obtain the following characterization of equiprojectivity.

\begin{thm}\label{BP.sec.0.thm.2}
A $3$-dimensional polytope $P$ is equiprojective if and only if, for every edge direction $u$ of $P$, either
\begin{itemize}
\item[(i)] the aggregated cone $C_P(u)$ is equal to $u^\perp$ or
\item[(ii)] the relative interior of $-C_P(u)$ is equal to $u^\perp\mathord{\setminus}C_P(u)$.
\end{itemize}
\end{thm}

Since in practice, computing the faces of a Minkowski sum of polytopes is done via their normal fans, Theorem \ref{BP.sec.0.thm.2} provides a way to prove Theorem \ref{BP.sec.0.thm.1} in the case when $k$ is odd. More generally, Theorem \ref{BP.sec.0.thm.2} allows to construct new classes of equiprojective polytopes. For instance, we prove the following.

\begin{thm}\label{BP.sec.0.thm.3}
Consider a $3$-dimensional polytope $P$ obtained as a Minkowski sum of finitely many polygons. If no two of these polygons share an edge direction, then $P$ is an equiprojective polytope.
\end{thm}

More generally, we will provide a condition under which a Minkowski sum of equiprojective polytopes, polygons, and line segments (that are allowed to share edge directions) is equiprojective (see Theorem \ref{BP.sec.4.thm.5}). We will also explain how the value of $k$ such that this Minkowski sum is $k$-equiprojective can be computed from the aggregated cones of the summands (see Theorem \ref{BP.sec.4.thm.6}). 

The article is organized as follows. We prove Theorem \ref{BP.sec.1.thm.2} and derive from it the special case of Theorem \ref{BP.sec.0.thm.1} when $k$ is even in Section~\ref{BP.sec.1}. We introduce the aggregated cones of a polytope and establish Theorem~\ref{BP.sec.0.thm.2} in Section \ref{BP.sec.2}. We give the announced Minkowski sum constructions of equiprojective polytopes and prove Theorem~\ref{BP.sec.0.thm.3} in Section \ref{BP.sec.2.5}. We provide the proof of Theorem \ref{BP.sec.0.thm.1} in the case when $k$ is odd in Section \ref{BP.sec.3}. Finally, we conclude the article with Section \ref{BP.sec.5}, where some questions in the spirit of Shephard's are stated about the decomposability of equiprojective polytopes.

\section{The combinatorial types of zonotopes}\label{BP.sec.1}

Up to translation, a zonotope is any subset of $\mathbb{R}^d$ of the form
$$
Z=\sum_{g\in\mathcal{G}}\mathrm{conv}\{0,g\}
$$
where $\mathcal{G}$ is a finite, non-empty set of pairwise non-collinear vectors from $\mathbb{R}^d\mathord{\setminus}\{0\}$, which we refer to as a \emph{set of generators} of $Z$. Note that $Z$ admits several sets of generators, each obtained from $\mathcal{G}$ by negating a part of the vectors it contains. In particular, $Z$ has $2^{|\mathcal{G}|}$ sets of generators, each of the same cardinality. We will refer to this common cardinality as the \emph{number of generators} of $Z$.

Recall that the face lattice of a polytope is the set of its faces ordered by inclusion and that two polytopes have the same combinatorial type if their face lattices are isomorphic (here, by an isomorphism, we mean a bijection that preserves face inclusion). The goal of this section is to estimate the number of combinatorial types of zonotopes in terms of their number of generators. This will allow to prove Theorem \ref{BP.sec.0.thm.1} when $k$ is even thanks to the following statement from \cite{HasanLubiw2008}, which we provide an alternative proof for.

\begin{prop}\label{BP.sec.1.prop.1}
A $3$-dimensional zonotope $Z$ is a $k$-equiprojective polytope, where $k$ is twice the number of generators of $Z$.
\end{prop}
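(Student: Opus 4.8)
The plan is to compute directly the orthogonal projection of a $3$-dimensional zonotope $Z = \sum_{g \in \mathcal{G}} \mathrm{conv}\{0,g\}$ onto a generic plane and to show it is a zonogon (a $2$-dimensional zonotope) whose number of generators equals $|\mathcal{G}|$, regardless of the projection direction. The key observation is that orthogonal projection commutes with Minkowski sums: if $\pi_H$ denotes the orthogonal projection onto a plane $H$, then $\pi_H(Z) = \sum_{g \in \mathcal{G}} \pi_H(\mathrm{conv}\{0,g\}) = \sum_{g \in \mathcal{G}} \mathrm{conv}\{0, \pi_H(g)\}$. Thus $\pi_H(Z)$ is itself a zonotope in $H$, generated by the projected segments.

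The remaining point is to count the vertices of this zonogon. A $2$-dimensional zonotope generated by $m$ pairwise non-collinear nonzero vectors is a centrally symmetric $2m$-gon; so I must show that, for a projection direction $u$ not orthogonal to any facet of $Z$, the $|\mathcal{G}|$ projected vectors $\pi_H(g)$ are all nonzero and pairwise non-collinear. First, $\pi_H(g) = 0$ would mean $g$ is parallel to $u$, i.e.\ $u$ is an edge direction of $Z$; since each edge direction of $Z$ is parallel to some generator and each facet of $Z$ is a zonogon spanned by at least two generators, an edge direction is always orthogonal to at least one facet, so this case is excluded by hypothesis. Second, if $\pi_H(g)$ and $\pi_H(g')$ were collinear for distinct $g, g' \in \mathcal{G}$, then $g$, $g'$, and $u$ would be coplanar; the plane they span (or any facet direction inside it) would then be orthogonal to $u$ — more carefully, $g$ and $g'$ span a facet of $Z$, and $u$ lying in the affine span of that facet means $u$ is parallel to that facet, again contradicting the hypothesis. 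Hence all $|\mathcal{G}|$ projected generators are nonzero and pairwise non-collinear, so $\pi_H(Z)$ has exactly $2|\mathcal{G}|$ vertices, and $Z$ is $k$-equiprojective with $k = 2|\mathcal{G}|$.

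The main obstacle is the careful handling of the "facet direction" argument: I need that whenever two distinct generators $g, g'$ become collinear after projecting along $u$, the direction $u$ is actually parallel to a facet of $Z$ (not merely to some $2$-plane). This uses the standard structure theory of zonotopes: the facets of $Z$ are in bijection with the $2$-dimensional faces of the associated hyperplane arrangement / oriented matroid, and every pair of generators spanning a $2$-flat contributes a facet of $Z$ normal to that $2$-flat. I would either cite this structure directly or argue via normal fans — the normal fan of $Z$ is the arrangement of the hyperplanes $g^\perp$, and the condition "$u$ parallel to a facet" is exactly "$u \in g^\perp$ for the generator $g$ normal to that facet", which I can match up with the projection becoming degenerate. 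Everything else is routine: commutation of projection with Minkowski sum, and the elementary fact that a planar Minkowski sum of $m$ non-collinear segments is a $2m$-gon.
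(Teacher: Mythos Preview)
Your approach is essentially identical to the paper's: both project the zonotope, observe the image is a zonogon via commutativity of orthogonal projection with Minkowski sums, and argue that the projected generators remain nonzero and pairwise non-collinear whenever the plane is not orthogonal to a facet---the paper asserts this last point in a single sentence, whereas you spell out the underlying zonotope structure. One small slip to fix: an edge direction is \emph{parallel} (not orthogonal) to at least one facet, which is precisely what forces $H$ to be orthogonal to that facet and yields the contradiction you want.
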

\begin{proof}
Consider a $3$-dimensional zonotope $Z$ contained in $\mathbb{R}^3$ and denote by $\mathcal{G}$ a set of generators of $Z$. Further consider a plane $H$, also contained in $\mathbb{R}^3$, that is not orthogonal to a facet of $Z$ and denote by $\pi:\mathbb{R}^3\rightarrow{H}$ the orthogonal projection on $H$. By construction $\pi(Z)$ can be expressed as
$$
\pi(Z)=\sum_{g\in\pi(\mathcal{G})}\mathrm{conv}\{0,g\}
$$
up to translation. As an immediate consequence, $\pi(Z)$ is a Minkowski sum of line segments contained in $H$ and, therefore, it is a $2$-dimensional zonotope (or in other words, a zonogon). Since $H$ is not orthogonal to a facet of $Z$, the orthogonal projections on $H$ of two distinct generators of $Z$ cannot be collinear. Hence, $Z$ and $\pi(Z)$ have the same number of generators.

Finally, recall that the number of vertices of a zonogon is twice the number of its generators. Therefore, we have shown that the number of vertices of the orthogonal projection of $Z$ on a plane that is not orthogonal to any of its facets is always twice the number of generators of $Z$, as desired.
\end{proof}

It is well known that the combinatorial types of zonotopes are determined by the \emph{oriented matroids} of their sets of generators \cite{BjornerLasVergnasSturmfelsWhiteZiegler1999}. Let us recall what the oriented matroids of a finite subset $\mathcal{X}$ of $\mathbb{R}^d\mathord{\setminus}\{0\}$ are. First pick a bijection
$$
\sigma:\{1,\ldots,|\mathcal{X}|\}\rightarrow\mathcal{X}
$$
whose role is to order the vectors of $\mathcal{X}$ so that each of them corresponds to a coordinate of $\mathbb{R}^{|\mathcal{X}|}$. A vector $z$ from $\{-1,0,1\}^{|\mathcal{X}|}$ is a \emph{covector of $\mathcal{X}$} with respect to $\sigma$ when there exists a non-zero vector $y$ in $\mathbb{R}^d$ such that $z_i$ is the sign of $\sigma(i)\mathord{\cdot}y$ for every $i$. The set $M_\sigma(\mathcal{X})$ of all the covectors of $\mathcal{X}$ with respect to $\sigma$ is an \emph{oriented matroid of $\mathcal{X}$}. Note that the other oriented matroids of $\mathcal{X}$ can be obtained by varying $\sigma$ or, equivalently, by letting the isometries of $\mathbb{R}^{|\mathcal{X}|}$ that permute the coordinates act on $M_\sigma(\mathcal{X})$. For more details on oriented matroids, see for instance \cite{BjornerLasVergnasSturmfelsWhiteZiegler1999} or \cite{Richter-GebertZiegler2017}. Two finite subsets $\mathcal{X}$ and $\mathcal{X}'$ of $\mathbb{R}^d\mathord{\setminus}\{0\}$ are called \emph{oriented matroid equivalent} when they have at least one oriented matroid in common. It is easy to check that oriented matroid equivalence is indeed an equivalence relation on the finite subsets of $\mathbb{R}^d\mathord{\setminus}\{0\}$.

The following statement is proven in \cite{BjornerLasVergnasSturmfelsWhiteZiegler1999} (see Corollary 2.2.3 therein). It provides the announced correspondence between the combinatorial type of a zonotope and the oriented matroids of its sets of generators.

\begin{prop}\label{BP.sec.1.prop.2}
Two zonotopes $Z$ and $Z'$ have the same combinatorial type if and only if for every set $\mathcal{G}$ of generators of $Z$, there exists a set $\mathcal{G}'$ of generators of $Z'$ such that $\mathcal{G}$ and $\mathcal{G}'$ are oriented matroid equivalent.
\end{prop}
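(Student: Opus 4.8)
The plan is to route the argument through the normal fan of a zonotope and to identify its face lattice with the poset of covectors of any one of its sets of generators, after which oriented matroid equivalence can be read off. First I would recall the description of the faces of $Z=\sum_{g\in\mathcal{G}}\mathrm{conv}\{0,g\}$ in terms of linear functionals: for $y\neq 0$ in $\mathbb{R}^d$, the face of $\mathrm{conv}\{0,g\}$ on which $y$ is maximal is $\{g\}$, $\{0\}$, or $\mathrm{conv}\{0,g\}$ according to whether $g\cdot y$ is positive, negative, or zero, and since the face of a Minkowski sum maximizing $y$ is the Minkowski sum of the faces of the summands maximizing $y$, the face $F_y$ of $Z$ maximizing $y$ equals, up to translation,
$$
F_y=\sum_{g\in\mathcal{G},\ g\cdot y>0}g\ +\sum_{g\in\mathcal{G},\ g\cdot y=0}\mathrm{conv}\{0,g\}.
$$
Every non-empty face of $Z$ other than $Z$ arises in this way, and $F_y$ depends on $y$ only through the sign vector $(\mathrm{sgn}(g\cdot y))_{g\in\mathcal{G}}$, which, once an ordering $\sigma$ of $\mathcal{G}$ is fixed, is exactly an element of $M_\sigma(\mathcal{G})$. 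I would therefore show that $y\mapsto F_y$ induces a bijection between the covectors in $M_\sigma(\mathcal{G})$ and the proper non-empty faces of $Z$, and that it reverses the natural orders: $F_{y'}\subseteq F_y$ precisely when $\mathrm{sgn}(g\cdot y')=\mathrm{sgn}(g\cdot y)$ for every $g$ with $g\cdot y\neq 0$, i.e. when the covector of $y$ lies below that of $y'$ for the coordinatewise order on $\{-1,0,1\}^{|\mathcal{G}|}$ in which $0$ precedes both $-1$ and $+1$. It then follows that the face lattice of $Z$ is anti-isomorphic to the poset obtained from $M_\sigma(\mathcal{G})$ by adjoining a greatest element together with, if it is not already present, the zero sign vector, the adjoined elements corresponding to the empty face and to $Z$ itself.

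This already settles the ``if'' direction. The poset just described does not change, up to isomorphism, when $\sigma$ is varied, so the combinatorial type of $Z$ is determined by the isomorphism type of the covector poset of any of its sets of generators; in particular, if $\mathcal{G}$ and $\mathcal{G}'$ are oriented matroid equivalent then, after a suitable reordering, their sets of covectors coincide, whence $Z$ and $Z'$ have isomorphic face lattices.

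For the ``only if'' direction it remains to recover the oriented matroid equivalence class of $\mathcal{G}$ from the isomorphism type of its covector poset, and this is the step I expect to be the main obstacle. The oriented matroid input I would invoke is that a simple oriented matroid---such as the one arising from the pairwise non-collinear, non-zero vectors of a set of generators---is determined by the isomorphism type of its poset of covectors up to reorientation and relabeling of the ground set; this is precisely the non-trivial half of \cite[Corollary~2.2.3]{BjornerLasVergnasSturmfelsWhiteZiegler1999}, which I would cite rather than reprove. Granting it, if $Z$ and $Z'$ have isomorphic face lattices, fix any set of generators $\mathcal{G}$ of $Z$ and any set of generators $\mathcal{G}''$ of $Z'$; their covector posets are isomorphic, so the oriented matroid of $\mathcal{G}$ agrees, after relabeling, with some reorientation of the oriented matroid of $\mathcal{G}''$. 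A reorientation amounts to negating a subset of $\mathcal{G}''$, which produces another set of generators $\mathcal{G}'$ of the same zonotope $Z'$, while the relabeling is absorbed in the choice of ordering defining the oriented matroids of $\mathcal{G}'$. Hence $\mathcal{G}$ and $\mathcal{G}'$ share an oriented matroid, as required. The bookkeeping on the zonotope side in the first two paragraphs is routine; essentially all of the content is concentrated in the cited reconstruction of an oriented matroid from its covector poset.
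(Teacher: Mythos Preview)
Your proposal is correct and aligns with the paper's own treatment: the paper does not give an independent proof of this proposition but simply refers to Corollary~2.2.3 of \cite{BjornerLasVergnasSturmfelsWhiteZiegler1999}, exactly the result you invoke for the non-trivial direction. Your additional unpacking of the face--covector correspondence for the ``if'' direction is standard and accurate, so you have effectively reproduced (with more detail) the argument behind the cited corollary.
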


According to Proposition \ref{BP.sec.1.prop.2}, counting the number of combinatorial types of $d$-dimensional zonotopes with $n$ generators amounts to counting the number of oriented matroids of $d$-dimensional sets of $n$ vectors from $\mathbb{R}^d\mathord{\setminus}\{0\}$. Estimates on these numbers have been given by Jacob Goodman and Richard Pollack \cite{GoodmanPollack1986} and  by Noga Alon \cite{Alon1986} in the terminology of order types \cite{Eppstein2018,FelsnerGoodman2017,GoaocWelzl2023,GoodmanPollack1983,Matousek2002}. Theorem~4.1 from~\cite{Alon1986} 
can be rephrased as follows in terms of oriented matroids. Observe that the lower bound in that statement is established in \cite[Section 5]{GoodmanPollack1986}.

\begin{thm}\label{BP.sec.1.thm.1}
The number $t(n,d)$ of oriented matroids of sets of $n$ vectors that span $\mathbb{R}^d$ and whose last coordinate is positive satisfies
$$
\biggl(\frac{n}{d-1}\biggr)^{(d-1)^2n\left(1+O\left(\frac{\mathrm{log}(d-1)}{\mathrm{log}\,n}\right)\right)}\leq{t(n,d)}\leq\biggl(\frac{n}{d-1}\biggr)^{(d-1)^2n\left(1+O\left(\frac{\mathrm{log}\,\mathrm{log}(n/(d-1))}{d\,\mathrm{log}(n/(d-1))}\right)\right)}
$$
when $n/d$ goes to infinity.
\end{thm}

Combining Proposition \ref{BP.sec.1.prop.2} and Theorem \ref{BP.sec.1.thm.1} makes it possible to provide estimates on the number of combinatorial types of zonotopes in terms of their dimension and number of generators. We prove Theorem \ref{BP.sec.1.thm.2} when $d$ is fixed and $n$ grows large instead of when $n/d$ grows large as in the statement of Theorem~\ref{BP.sec.1.thm.1} because we will only need it in the $3$-dimensional case. 

\begin{proof}[Proof of Theorem \ref{BP.sec.1.thm.2}]
When $n$ and $d$ are both greater than $1$,
$$
\frac{n}{d-1}=n^{1-\frac{\mathrm{log}(d-1)}{\mathrm{log}\,n}}\mbox{.}
$$

Hence, it follows from Theorem \ref{BP.sec.1.thm.1} that
$$
n^{(d-1)^2n\left(1+O\left(\frac{1}{\mathrm{log}\,n}\right)\right)}\leq{t(n,d)}\leq{n^{(d-1)^2n\left(1+O\left(\frac{\mathrm{log}\,\mathrm{log}\,n}{\mathrm{log}\,n}\right)\right)}}
$$
when $d$ is fixed and $n$ goes to infinity. Hence, it suffices to show that
$$
\frac{t(n,d)}{2^nn!}\leq{z(n,d)}\leq{t(n,d)}\mbox{,}
$$
and use Stirling's approximation formula which implies
$$
2^nn!=n^{n\left(1+O\left(\frac{1}{\mathrm{log}\,n}\right)\right)}
$$
when $n$ goes to infinity.

Let $Z$ be a $d$-dimensional zonotope with $n$ generators contained in $\mathbb{R}^d$. Note that $Z$ admits sets of generators that are contained in an open half-space of $\mathbb{R}^d$ bounded by a hyperplane through the origin: these sets of generators can be obtained by appropriately negating some of the vectors in an arbitrary set of generators of $Z$. Denote by $\mathcal{M}(Z)$ the union of the equivalence classes modulo oriented matroid equivalence of all these sets of generators of $Z$. Further note that any oriented matroid in any of these equivalence classes is the oriented matroid of a set of $n$ vectors spanning $\mathbb{R}^d$ and whose last coordinate is positive. This is due to the fact that isometries of $\mathbb{R}^d$ do not change the oriented matroid of a set of vectors. It follows from Proposition \ref{BP.sec.1.prop.2} that a zonotope $Z'$ has the same combinatorial type as $Z$ if and only if $\mathcal{M}(Z')$ coincides with $\mathcal{M}(Z)$. This shows, in particular that $z(n,d)$ is at most $t(n,d)$.

By construction, the oriented matroid of a set $\mathcal{G}$ of $n$ vectors that span $\mathbb{R}^d$ and whose last coordinate is positive is contained in $\mathcal{M}(Z)$, where $Z$ is any zonotope that admits $\mathcal{G}$ as a set of generators. Moreover, $\mathcal{M}(Z)$ is the union of at most $2^nn!$ equivalence classes modulo oriented matroid equivalence because $Z$ has $2^n$ sets of generators and each of these sets has at most $n!$ oriented matroids. Therefore, $t(n,d)$ is at most $2^nn!$ times $z(n,d)$, as desired.
\end{proof}

When $k$ is even, one obtains from Proposition \ref{BP.sec.1.prop.1} and Theorem \ref{BP.sec.1.thm.2} that the number of $k$-equiprojective polytopes is at least
$$
\biggl(\frac{k}{2}\biggr)^{k\left(\frac{3}{2}+O\left(\frac{1}{\mathrm{log}\,k}\right)\right)}\mbox{.}
$$

The special case of Theorem \ref{BP.sec.0.thm.1} when $k$ is even follows from this inequality and the observation that, if $k$ is greater than $1$, then
$$
\frac{k}{2}=k^{1-\frac{\mathrm{log}\,2}{\mathrm{log}\,k}}\mbox{.}
$$

\section{Aggregated cones}\label{BP.sec.2}

In order to prove Theorem \ref{BP.sec.0.thm.1} when $k$ is odd, we will use a construction of equiprojective polytopes via Minkowski sums. The behavior of equiprojectivity with respect to Minkowski sums will be determined via Theorem \ref{BP.sec.0.thm.2}, whose proof is given in this section. Our starting point for this proof is the article by Masud Hasan and Anna Lubiw \cite{HasanLubiw2008}. Let us recall some of the terminology introduced in that article. In the following definition, given at the beginning of Section~2 in~\cite{HasanLubiw2008}, an \emph{edge-facet incidence} of a $3$-dimensional polytope $P$ is a pair $(e,F)$ such that $F$ is a facet of $P$ and $e$ an edge of $F$.

\begin{defn}
Two edge-facet incidences $(e,F)$ and $(e',F')$ of a $3$-dimensional polytope $P$ \emph{compensate} when $e$ and $e'$ are parallel, and either
\begin{enumerate}
\item[(i)] $F$ and $F'$ coincide but $e$ and $e'$ are distinct or
\item[(ii)] $F$ and $F'$ are parallel and distinct facets of $P$ whose relative interiors are on the same side of the plane that contains $e$ and $e'$. 
\end{enumerate}
\end{defn}

The following theorem is Theorem 1 from \cite{HasanLubiw2008}.

\begin{thm}\label{BP.sec.4.thm.1}
A $3$-dimensional polytope is equiprojective if and only if the set of its edge-facet incidences can be partitioned into compensating pairs.
\end{thm}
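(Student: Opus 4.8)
The plan is to translate the number of vertices of a projection of $P$ into intersection data on the sphere coming from the normal fan of $P$, and then to track how that number varies with the direction of projection. Fix a $3$-dimensional polytope $P\subseteq\mathbb{R}^3$. For $u\in\mathbb{S}^2$, write $\pi_u$ for the orthogonal projection onto $u^\perp$ and $C_u$ for the great circle $u^\perp\cap\mathbb{S}^2$, and let $\Gamma$ be the union of the sets $N\cap\mathbb{S}^2$ as $N$ runs over the $2$-dimensional normal cones of $P$. Then $\Gamma$ is a graph drawn on $\mathbb{S}^2$ whose vertices are the facet normals of $P$, whose arcs are indexed by the edges of $P$ (the arc of an edge $e$ joins the two facet normals at $e$ along the great circle orthogonal to $e$), and whose complementary regions are the spherical $3$-dimensional normal cones of the vertices of $P$. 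The first step is to observe that, when $u$ is not parallel to a facet of $P$ — equivalently, when $C_u$ avoids every vertex of $\Gamma$ — the polygon $\pi_u(P)$ has as many vertices as edges, and a unit vector $\theta$ in $u^\perp$ is the outer normal of an edge of $\pi_u(P)$ if and only if $\theta\in\Gamma$: the face of $\pi_u(P)$ on which $\langle\theta,\cdot\rangle$ is maximised is the image under $\pi_u$ of the face of $P$ on which $\langle\theta,\cdot\rangle$ is maximised, and the latter projects to an edge exactly when $\theta$ lies in the relative interior of a $2$-dimensional normal cone of $P$. Hence $P$ is $k$-equiprojective if and only if $\#(C_u\cap\Gamma)=k$ for every such $u$.

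The second step is to show that $u\mapsto\#(C_u\cap\Gamma)$ is locally constant on $\mathbb{S}^2$ minus the great circles $n_F^\perp\cap\mathbb{S}^2$ attached to the facets $F$ of $P$. This count can change only when $C_u$ becomes tangent to an arc of $\Gamma$ or sweeps across a vertex of $\Gamma$; the former cannot happen because an arc of $\Gamma$ lies on a great circle and two distinct great circles always meet transversally (and within one of our cells $C_u$ cannot coincide with such a great circle, since that would force $u$ to be parallel to an edge, hence to a facet), while the latter means precisely $u\in n_F^\perp$ for the facet $F$ in question. So $P$ is equiprojective if and only if, for every facet $F$ of $P$ and every $u_0\in n_F^\perp\cap\mathbb{S}^2$ that lies on no other such circle and is parallel to no edge of $P$, the crossing number does not jump as $u$ crosses $n_F^\perp$ at $u_0$.

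The third and central step is this jump computation. As $u$ crosses $n_F^\perp$ at such a $u_0$, the circle $C_u$ sweeps past exactly the vertices of $\Gamma$ collinear with $n_F$, namely the outer normal of the facet $F^{+}$ pointing like $n_F$ and, if it exists, the outer normal of the opposite facet $F^{-}$. Near a vertex $n_{F'}$ the arcs of $\Gamma$ leave in the directions, inside the plane of $F'$, of the outer normals of the edges of $F'$; a sweep-line count then shows that the contribution of $n_{F'}$ to the jump equals, up to an overall sign, the integer $\sum_{e\subseteq F'}\mathrm{sign}\langle\nu_{e,F'},u_0\rangle$, where $\nu_{e,F'}$ is that outer edge normal, and that the contributions of $F^{+}$ and $F^{-}$ enter with opposite signs. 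Thus $P$ is equiprojective if and only if, for every facet direction, the function $u_0\mapsto\sum_\mu\mathrm{sign}\langle\mu,u_0\rangle$ vanishes identically, where $\mu$ runs over the multiset consisting of $\nu_{e,F^{+}}$ for each edge $e$ of $F^{+}$ together with $-\nu_{e,F^{-}}$ for each edge $e$ of $F^{-}$. One then checks the elementary fact that such a signed sum of half-circle indicators vanishes identically if and only if the multiset $\{\mu\}$ splits into pairs of opposite vectors (inspect the jumps of the sum at the two directions orthogonal to each occurring $\mu$), and a short coordinate computation shows that, for parallel edges $e$ of $F^{+}$ and $e'$ of $F^{-}$, the equality $\nu_{e,F^{+}}=\nu_{e',F^{-}}$ holds exactly when the relative interiors of $F^{+}$ and $F^{-}$ lie on the same side of the plane through $e$ and $e'$. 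Reading an opposite-pair decomposition of $\{\mu\}$ back on the edge-facet incidences of $P$ then produces a partition into compensating pairs — a pair inside a single facet yields type (i), since opposite outer edge normals in a convex polygon come from a unique pair of distinct parallel edges, and a pair across $F^{+}$ and $F^{-}$ yields type (ii) — and conversely every partition into compensating pairs arises this way, which settles both implications.

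The step I expect to be the real obstacle is the jump computation in the third step: pinning down the orientation of the sweeping great circle near each of $n_{F^{+}}$ and $n_{F^{-}}$, verifying that the two local contributions genuinely enter with opposite signs (this is where the antipodal symmetry of great circles and the fact that $F^{-}$ is the opposite facet intervene), and disposing of the degenerate configurations — only one of $F^{\pm}$ present, or $u_0$ accidentally orthogonal to an edge direction parallel to $F$ — so that it is indeed the generic crossing number that governs equiprojectivity.
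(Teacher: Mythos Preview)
The paper does not give its own proof of this theorem: it is quoted as Theorem~1 of Hasan--Lubiw \cite{HasanLubiw2008} and used as a black box (the proofs of Lemmas~\ref{BP.sec.4.lem.1} and~\ref{BP.sec.4.lem.2} both invoke it). So there is no in-paper argument to compare against.

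Your sketch is nonetheless correct and is essentially the natural direct proof. The identification of the shadow's vertex count with $\#(C_u\cap\Gamma)$ is standard; the local constancy on the complement of the facet great circles is right because each arc of $\Gamma$ spans strictly less than a semicircle (the two facet normals at an edge of a convex $3$-polytope are never antipodal), so $C_u$ meets each arc transversally in at most one point. The jump computation is also right: in the tangent plane at $n_{F'}$, identified with the affine direction of $F'$, the arcs indeed leave along the outer edge normals $\nu_{e,F'}$, and the sweep-line count gives $\sum_e\mathrm{sign}\langle\nu_{e,F'},u_0\rangle$; the opposite signs at $F^+$ and $F^-$ come exactly from the fact that $\langle n_F,u\rangle$ and $\langle -n_F,u\rangle$ cross zero with opposite orientations. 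Your combinatorial lemma on sign sums is true (the total function is odd on the circle and piecewise constant, so it vanishes identically iff all local jumps vanish, which forces the multiset of $\mu$'s to be negation-symmetric), and your dictionary between ``opposite pair within one facet'' versus ``equal pair across $F^+$ and $F^-$'' and the two types of compensation is accurate. The degenerate cases you flag cause no trouble once you restrict to generic $u_0$ on each $n_F^\perp$, and when $F^-$ is absent your condition simply forces every edge direction of $F^+$ to occur with even multiplicity, i.e.\ all compensations at $F^+$ are of type~(i).

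It is worth noting that your spherical picture is precisely what the paper later repackages as \emph{aggregated cones}: your multiset $\{\mu\}$ attached to a facet direction records the same data as the boundary rays of the cones in $\mathcal{E}_P(u)$ for the relevant edge directions $u$, and your jump-vanishing condition is the facet-by-facet shadow of the partition condition in Theorem~\ref{BP.sec.0.thm.2}. The paper derives that theorem \emph{from} Hasan--Lubiw; your argument would prove Hasan--Lubiw from scratch and give Theorem~\ref{BP.sec.0.thm.2} for free.
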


Now recall that, given a polytope $P$ contained in $\mathbb{R}^d$ (of dimension possibly less than $d$) and a face $F$ of $P$, the \emph{normal cone of $P$ at $F$} is defined as
$$
N_P(F)=\bigl\{u\in\mathbb{R}^d:\forall\,(x,y)\in{P\mathord{\times}F},\,u\mathord{\cdot}x\leq{u\mathord{\cdot}y}\bigr\}\mbox{.}
$$

The normal cone of a $j$-dimensional face of $P$ is a $(d-j)$-dimensional closed polyhedral cone. In particular, if $P$ is a polytope of any dimension contained in $\mathbb{R}^3$, then a normal cone of $P$ is $2$-dimensional if and only if it is a normal one of $P$ at one of its edges. Moreover, the normal cones of $P$ at two of its edges span the same plane when these edges are parallel. Recall that the plane through the origin of $\mathbb{R}^3$ orthogonal to a non-zero vector $u$ is denoted by $u^\perp$. 

\begin{defn}\label{BP.sec.4.defn.1}
Consider a polytope $P$ and a non-zero vector $u$, both contained in $\mathbb{R}^3$. The \emph{aggregated cone} $C_P(u)$ of $P$ at $u$ is the union of all the $2$-dimensional normal cones of $P$ that are contained in $u^\perp$.
\end{defn}

It should be noted that, while an aggregated cone of $P$ is not necessarily convex, it is still a cone in the sense that it is invariant upon multiplication by a positive number. Further observe that in the statement of Definition \ref{BP.sec.4.defn.1}, if there is no edge of $P$ parallel to $u$, then $C_P(u)$ is empty. For this reason, we are only really interested in the vectors that model the edge directions of a polytope. In addition, it will be useful in the sequel to have just one vector for each possible edge direction and we propose the following definition.

\begin{defn}\label{BP.sec.4.defn.2}
Consider a polytope $P$ contained in $\mathbb{R}^3$ (of dimension possibly less than $3$). A vector $u$ from $\mathbb{S}^2$ is an \emph{edge direction} of $P$ when
\begin{itemize}
\item[(i)] the first non-zero coordinate of $u$ is positive and
\item[(ii)] there is an edge of $P$ parallel to $u$.
\end{itemize}
\end{defn}

Note that edge directions could alternatively be defined as a finite subset of the real projective plane $\mathbb{RP}^2$. The aggregated cones of a polygon or a line segment at their edge directions are particularly well-behaved.

\begin{rem}\label{BP.sec.4.rem.1}
The aggregated cone of a polygon $P$ at an edge direction $u$ is a plane when $P$ has two edges parallel to $u$ and a half-plane when $P$ has a single edge parallel to $u$. Moreover, a line segment has a unique edge direction and its aggregated cone at this edge direction is a plane.
\end{rem}

The notion of an aggregated cone at an edge direction is illustrated in Figures~\ref{BP.sec.4.fig.1} and \ref{BP.sec.4.fig.1.5}. Figure~\ref{BP.sec.4.fig.1.5} shows a triangular prism $P$ and two of its aggregated cones. At the top of the figure, $u$ is the edge direction of $P$ parallel to the three edges $e$, $e'$, and $e''$ of $P$ that are not contained in a triangular face. The orthogonal projection of $P$ along $u$ is depicted at the center of the figure. In that case, $C_P(u)$ coincides with $u^\perp$ as shown on the right of the figure. At the bottom of Figure~\ref{BP.sec.4.fig.1.5}, $u$ is an edge direction parallel to two edges $e$ and $e'$, each contained in one of the triangular faces of $P$ and $C_P(u)$ is the half-plane depicted on the right of the figure. Again, the orthogonal projection of $P$ along $u$ is shown at the center of the figure. Note that $P$ has exactly three edge directions parallel to the edges of its triangular faces. In particular, up to symmetry, Figure~\ref{BP.sec.4.fig.1.5} shows all the aggregated cones of a triangular prism.
\begin{figure}
\begin{centering}
\includegraphics[scale=1]{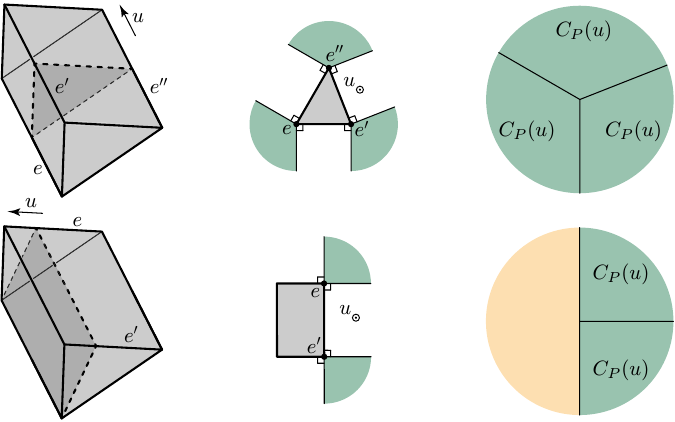}
\caption{A triangular prism (left), its orthogonal projection along $u$ (center), and its aggregated cone $C_P(u)$ at $u$ (right) for two edge directions $u$ (top and bottom).}\label{BP.sec.4.fig.1.5}
\end{centering}
\end{figure}

A regular dodecahedron $P$ and two of its opposite edges $e$ and $e'$ are depicted on the left of Figure \ref{BP.sec.4.fig.1}. Both of these edges are parallel to the same edge direction $u$. The orthogonal projection of $P$ on $u^\perp$ is shown at the center of the figure and the aggregated cone $C_P(u)$ is depicted on the right. It can be seen that $C_P(u)$ is the union of two opposite cones that do not entirely cover $u^\perp$. In particular, this illustrates that, while $C_P(u)$ is always a cone, this cone is not always convex. It should be noted that $P$ is not equiprojective: its orthogonal projection on a plane parallel to a facet is a decagon while its orthogonal projection along a line through two opposite vertices is a dodecagon. However, an example of an equiprojective polytope with a non-convex aggregated cone---the equitruncated tetrahedron \cite{HasanHossainLopez-OrtizNusratQuaderRahman2022}---will be discussed in Section~\ref{BP.sec.5}. Observe that, by the symmetries of the regular dodecahedron, the aggregated cones at the edges of this polytope are all equal up to isometry. In particular, Figure \ref{BP.sec.4.fig.1} depicts all the aggregated cones of the regular dodecahedron.

Note that a facet $F$ of $P$ has at most two edges parallel to any given edge direction $u$ of $P$. Moreover, $F$ has exactly one edge parallel to $u$ if and only if the normal cone of $P$ at $F$ is one of the half-lines that compose the relative boundary of $C_P(u)$. We will use this in the proof of Theorem~\ref{BP.sec.0.thm.2}. This theorem is an equivalence and we shall prove two separate implications.
\begin{figure}
\begin{centering}
\includegraphics[scale=1]{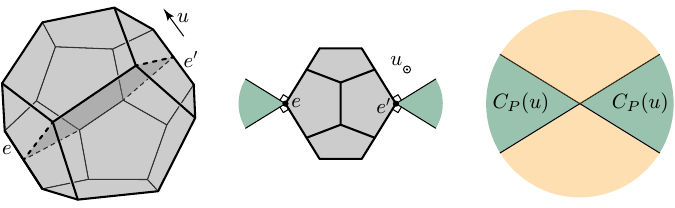}
\caption{A regular dodecahedron $P$ (left), its orthogonal projection along the edge direction $u$ parallel to the edges $e$ and $e'$ (center), and its aggregated cone $C_P(u)$ at $u$ (right).}\label{BP.sec.4.fig.1}
\end{centering}
\end{figure}

\begin{lem}\label{BP.sec.4.lem.1}
Consider a $3$-dimensional polytope $P$. If $P$ is equiprojective, then for any edge direction $u$ of $P$, either
\begin{itemize}
\item[(i)] the aggregated cone $C_P(u)$ is equal to $u^\perp$ or
\item[(ii)] the relative interior of $-C_P(u)$ is equal to $u^\perp\mathord{\setminus}C_P(u)$.
\end{itemize}
\end{lem}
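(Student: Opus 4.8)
The plan is to deduce the lemma from the Hasan--Lubiw criterion, Theorem~\ref{BP.sec.4.thm.1}. Fix an edge direction $u$ of $P$ and identify $u^\perp$ with $\mathbb{R}^2$, so that $C_P(u)$ is a finite union of two-dimensional cones and $C_P(u)\cap\mathbb{S}^2$ is a finite union of closed circular arcs on $\mathbb{S}^1:=u^\perp\cap\mathbb{S}^2$. If $C_P(u)=u^\perp$ we are in case~(i), so assume $C_P(u)\subsetneq u^\perp$ and aim for case~(ii). Since $P$ is equiprojective, its edge-facet incidences partition into compensating pairs, and because compensation requires parallel edges, this restricts to a partition into compensating pairs of the set $I_u$ of edge-facet incidences $(e,F)$ with $e$ parallel to $u$.

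First I would record the local shape of $C_P(u)$ at one of its boundary rays. Such a ray is $N_P(F)$ for a facet $F$ with exactly one edge $e$ parallel to $u$ (the observation made just before the lemma); writing $\hat F$ for the other facet containing $e$ and $d_F\in u^\perp$ for the inner unit normal of $e$ inside the polygon $F$ (which lies in $u^\perp$ since $e$ is parallel to $u$), one has $N_P(e)=\mathrm{cone}\{n_F,n_{\hat F}\}$ with $n_{\hat F}\cdot d_F<0$. Hence, near $N_P(F)$, the cone $C_P(u)$ coincides with $N_P(e)$ and lies on the side of $N_P(F)$ pointed to by $-d_F$. In particular the endpoints of the arcs forming $C_P(u)\cap\mathbb{S}^2$ are precisely the directions $n_F$ of the facets with a single parallel edge, and $d_F$ records on which side of $n_F$ the corresponding arc sits.

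The heart of the argument is to match these endpoints antipodally. Let $(e,F)\in I_u$ with $F$ having a unique parallel edge, and let $(e',F')$ be its compensating partner. A type-(i) compensation is impossible (it would give $F$ a second edge parallel to $u$), so the pair compensates by type~(ii); thus $F'$ is a facet parallel to and distinct from $F$, whence $n_{F'}=-n_F$, and the relative interiors of $F$ and $F'$ lie on the same side of the plane through $e$ and $e'$. From $n_{F'}=-n_F$ we get $d_{F'}=\pm d_F$, and a short computation shows the ``same side'' requirement forces $d_{F'}=d_F$. Next I would argue $F'$ too has exactly one edge parallel to $u$: otherwise its other parallel edge lies in a pair which, by the same type-(i)/type-(ii) dichotomy together with the fact that a polytope has at most one facet with a given outer normal, is forced to pair it with $(e,F)$ or with $(e',F')$, contradicting that the pairs are disjoint. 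Consequently $-n_F=n_{F'}$ is again an endpoint of an arc of $C_P(u)\cap\mathbb{S}^2$, and the relations $n_{F'}=-n_F$, $d_{F'}=d_F$ say exactly that the arc at $n_F$ and the arc at $-n_F$ lie on opposite sides of this antipodal pair of rays. Since the set of boundary rays is then closed under $\rho\mapsto-\rho$ we have $\partial C_P(u)=\partial(-C_P(u))$, and since $C_P(u)$ and $-C_P(u)$ lie on opposite sides of each boundary ray, exactly one of them is present on each of the finitely many angular sectors that the boundary rays cut out of $u^\perp$; this is precisely the statement that $C_P(u)$ and $\mathrm{relint}(-C_P(u))$ partition $u^\perp$, i.e.\ case~(ii).

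I expect the main obstacle to be the orientation bookkeeping in the third paragraph: checking that $C_P(u)$ sits on the $-d_F$ side of its boundary ray $N_P(F)$, and translating the geometric ``same side of the plane through $e$ and $e'$'' condition into the clean identity $d_{F'}=d_F$. Once these signs are pinned down, patching the complement of $C_P(u)$ into $\mathrm{relint}(-C_P(u))$ is a formal consequence of the antipodal matching of the arc endpoints.
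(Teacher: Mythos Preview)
Your proposal is correct and follows essentially the same route as the paper: both proofs invoke the Hasan--Lubiw compensation criterion (Theorem~\ref{BP.sec.4.thm.1}), identify the boundary rays of $C_P(u)$ with the normals of facets having a \emph{single} edge parallel to $u$, use the forced type-(ii) compensation to match each such ray antipodally to another boundary ray while controlling the side on which $C_P(u)$ sits, and conclude the partition of $u^\perp$. Your bookkeeping via the inner normal $d_F$ and the identity $d_{F'}=d_F$ is exactly the ``same side'' analysis the paper carries out geometrically with the cones $E$, $V$, $W$ in $\mathcal{E}_P(u)$ and $\mathcal{V}_P(u)$; your final step (antipodal matching of boundary rays with opposite sides forces exactly one of $C_P(u)$, $-C_P(u)$ on each sector) is a slightly more direct version of the paper's argument that $-E\in\mathcal{V}_P(u)$ for every $E\in\mathcal{E}_P(u)$.
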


\begin{proof}
Assume that $P$ is equiprojective and consider an edge direction $u$ of $P$ such that $C_P(u)$ is not equal to $u^\perp$. Let $v$ be a unit vector in the relative boundary of $C_P(u)$ such that the relative interior of $C_P(u)$ lies counter-clockwise from $v$. Denote by $w$ the first unit vector in $u^\perp$ counter-clockwise from $v$ that is contained in the relative boundary of $C_P(u)$. Let us show in a first step that $-v$ and $-w$ both belong to the relative boundary of $C_P(u)$.
\begin{figure}
\begin{centering}
\includegraphics[scale=1]{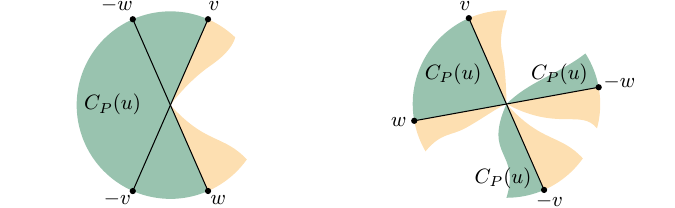}
\caption{An illustration of the proof of Lemma \ref{BP.sec.4.lem.1}.}\label{BP.sec.4.fig.2}
\end{centering}
\end{figure}

As discussed above, $v$ and $w$ span the normal cones of $P$ at two facets $F$ and $G$, respectively, that each admit a unique edge parallel to $u$. Denote these edges of $F$ and $G$ by $e_F$ and $e_G$. Since $P$ is equiprojective, it follows from Theorem~\ref{BP.sec.4.thm.1} that the edge-facet incidence $(e_F,F)$ must be compensated, but since $F$ doesn't have another edge parallel to $e_F$, there must exist a facet $F'$ of $P$ distinct from but parallel to $F$ and an edge $e_{F'}$ of $F'$ parallel to $e_F$ such that the relative interiors of $F$ and $F'$ are on the same side of the plane that contains these two edges. Now observe that $F'$ cannot have another edge parallel to $e_{F'}$. Indeed the edge-facet incidence formed by such an edge with $F'$ couldn't be compensated by any other edge-facet incidence than $(e_{F'},F')$ but this one already compensates $(e_F,F)$. Hence $-v$ is in the relative boundary of $C_P(u)$. By symmetry, $-w$ is also in the relative boundary of $C_P(u)$.

Since $-v$ belongs to the relative boundary of $C_P(u)$, it cannot lie before $w$ counterclockwise from $v$ as shown on the left of Figure \ref{BP.sec.4.fig.2}. Hence, $v$, $w$, and all the unit vectors between them counter-clockwise from $v$ are contained in a half-plane as shown on the right of the figure. As the relative interiors of $F$ and $F'$ are on the same side of the plane that contains $e_F$ and $e_{F'}$, the relative interior of $C_P(u)$ must lie clockwise from $-v$ as shown on the right of Figure \ref{BP.sec.4.fig.2}. Likewise, the relative interior of $C_P(u)$ lies counter-clockwise from $-w$. In particular, all the unit vectors in $u^\perp$ counter-clockwise from $-v$ and clockwise from $-w$ must belong to $u^\perp\mathord{\setminus}C_P(u)$. Indeed, otherwise one of these vectors would belong to the relative boundary of $C_P(u)$ and, as we have just established, its opposite would be a unit vector in the relative boundary of $C_P(u)$ that would lie before $w$ counter-clockwise from $v$, contradicting our assumption on $w$.

This shows that the relative interior of $-C_P(u)$ is contained in $u^\perp\mathord{\setminus}C_P(u)$. By symmetry, one can exchange $C_P(u)$ with the closure of $u^\perp\mathord{\setminus}C_P(u)$ in the above argument and the inverse inclusion therefore also holds.
\end{proof}

The following lemma states the other implication from Theorem \ref{BP.sec.0.thm.2}. It will be proven via Theorem \ref{BP.sec.4.thm.1} by constructing an explicit partition of the set of all edge-facet incidences into compensating pairs.

\begin{lem}\label{BP.sec.4.lem.2}
If, for any edge direction $u$ of a $3$-dimensional polytope $P$,
\begin{itemize}
\item[(i)] the aggregated cone $C_P(u)$ is equal to $u^\perp$ or
\item[(ii)] the relative interior of $-C_P(u)$ is equal to $u^\perp\mathord{\setminus}C_P(u)$.
\end{itemize}
then $P$ is equiprojective.
\end{lem}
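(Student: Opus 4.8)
The plan is to apply Theorem~\ref{BP.sec.4.thm.1} and exhibit an explicit partition of the set of edge-facet incidences of $P$ into compensating pairs. Since every edge of $P$ is parallel to exactly one edge direction of $P$, the incidences split according to the edge direction of their edge, so it is enough to fix an edge direction $u$ of $P$ and partition the incidences $(e,F)$ with $e$ parallel to $u$ into compensating pairs. Throughout, I write $\pi$ for the orthogonal projection of $\mathbb{R}^3$ onto $u^\perp$ along $u$.

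First I would record the dictionary relating the combinatorics of $P$ near $u$ to the aggregated cone. If $e$ is an edge of $P$ parallel to $u$, then $N_P(e)$ is a $2$-dimensional cone contained in $u^\perp$, and in fact $N_P(e)=N_{\pi(P)}(\pi(e))$, so $\pi(e)$ is a vertex of the polygon $\pi(P)$; dually, each facet $F$ of $P$ with an edge parallel to $u$ has $N_P(F)$ equal to a ray in $u^\perp$, which is the outer normal of the edge $\pi(F)$ of $\pi(P)$. A facet of $P$ contains at most two edges parallel to $u$; call it \emph{double} if it contains two such edges and \emph{simple} if it contains one. The incidences $(e,F)$ with $e$ parallel to $u$ are in bijection with the pairs consisting of a cone $N_P(e)$ together with one of its two bounding rays $N_P(F)$; moreover $N_P(F)$ lies in the interior of $C_P(u)$ when $F$ is double and on the boundary of $C_P(u)$ when $F$ is simple.

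Next I would construct the partition. For each double facet $F$, with edges $e_{1}$ and $e_{2}$ parallel to $u$, I pair $(e_{1},F)$ with $(e_{2},F)$: these compensate since they involve the same facet and two distinct parallel edges. The incidences not used in this step are exactly those $(e,F)$ with $F$ simple, and these are in bijection with the boundary rays of $C_P(u)$, each such ray being $N_P(F)$ for a unique simple facet $F$, with $e$ the unique edge of $F$ parallel to $u$. If $C_P(u)=u^\perp$ there are no such incidences and we are done. Otherwise hypothesis~(ii) gives that $C_P(u)$ and $\mathrm{relint}(-C_P(u))$ partition $u^\perp$, whence $\mathrm{int}\,C_P(u)=u^\perp\setminus(-C_P(u))$ and so $\partial C_P(u)=C_P(u)\cap(-C_P(u))$; in particular $\partial C_P(u)$ is invariant under $x\mapsto -x$, so the boundary rays of $C_P(u)$ come in antipodal pairs $\{\rho,-\rho\}$. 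For each such pair I take the simple facets $F$ and $F'$ with $N_P(F)=\rho$ and $N_P(F')=-\rho$, together with their unique edges $e$ and $e'$ parallel to $u$, and pair $(e,F)$ with $(e',F')$. Here $F$ and $F'$ are parallel and distinct and $e$ and $e'$ are parallel, so to conclude that this is a compensating pair it remains only to verify the condition in case~(ii) of the definition of compensation: that $\mathrm{relint}\,F$ and $\mathrm{relint}\,F'$ lie on the same side of the plane $\Pi$ spanned by $e$ and $e'$.

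I expect this last point to be the main obstacle. To handle it I would normalise coordinates so that $u=(0,0,1)$ and $\rho=\mathbb{R}_{\geq 0}(1,0,0)$; then $F$ is the facet of $P$ on which the first coordinate is maximal and $F'$ the one on which it is minimal, both $e$ and $e'$ are vertical segments, and $\Pi$ is a vertical plane. A short computation then shows that $\mathrm{relint}\,F$ and $\mathrm{relint}\,F'$ lie on the same side of $\Pi$ if and only if $F$ extends away from $e$ and $F'$ extends away from $e'$ in the same direction along the coordinate axis orthogonal to both $u$ and $\rho$. This direction can be read off the polygon $\pi(P)$: whether $F$ extends from $e$ in the positive or the negative such direction is determined by which of the two open half-planes bounded by the line spanned by $\rho$ contains the cone $N_P(e)$ near $\rho$, and symmetrically for $F'$ near $-\rho$. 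Since $\rho$ is a boundary ray of $C_P(u)$, near $\rho$ the set $C_P(u)$ consists of $\rho$ together with exactly one of those two open half-planes, and, by the partition hypothesis, $\mathrm{relint}(-C_P(u))$ occupies the other; negating this and comparing with the analogous statement at $-\rho$ forces $N_P(e)$ and $N_P(e')$ to sit on corresponding sides, i.e.\ $F$ and $F'$ to extend in the same direction. This completes the partition for $u$, and letting $u$ range over all edge directions of $P$, Theorem~\ref{BP.sec.4.thm.1} shows that $P$ is equiprojective.
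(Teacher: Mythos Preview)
Your proof is correct and follows essentially the same route as the paper's: both invoke Theorem~\ref{BP.sec.4.thm.1} and build the compensating partition edge-direction by edge-direction, pairing the two incidences within a facet that has two edges parallel to $u$, and otherwise matching the incidence at a ``simple'' facet with the one at the antipodal simple facet furnished by hypothesis~(ii). The only cosmetic differences are that you introduce the projection $\pi$ and the simple/double terminology, and you verify the same-side condition via a coordinate normalisation, whereas the paper argues directly that the cones of $\mathcal{E}_P(u)$ adjacent to $N_P(F)$ and to $-N_P(F)$ lie on the same side of the line they span.
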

\begin{proof}
Assume that the condition in the statement of the lemma holds for any edge direction $u$ of a $3$-dimensional polytope $P$. We will partition the edge-facet incidences of $P$ into compensating pairs and the desired result will follow from Theorem \ref{BP.sec.4.thm.1}. Consider a facet $F$ of $P$ and an edge $e$ of $F$. Denote by $u$ the edge direction of $P$ parallel to $e$ and by $v$ the unit vector that spans the normal cone of $P$ at $F$. We consider two mutually-exclusive cases.

First, assume that $e$ is the only edge of $F$ parallel to $u$. In that case, $C_P(u)$ is not equal to $u^\perp$ and $v$ belongs to the relative boundary of $C_P(u)$. According to assertion (ii), so does $-v$. Therefore, $P$ has a facet $F'$ parallel to and distinct from $F$ that has a unique edge $e'$ parallel to $u$. By the same assertion, the relative interior of $C_P(u)$ either lies counter-clockwise from $u$ and clockwise from $-u$ or clockwise from $u$ and counter-clockwise from $-u$. As a consequence, the relative interiors of $F$ and $F'$ lie on the same side of the plane that contains $e$ and $e'$. This shows that $(e',F')$ compensates $(e,F)$ and we assign these two edge-facet incidences to a pair in the announced partition. Observe that the same process starting with $(e',F')$ instead of $(e,F)$ would have resulted in the same pair of compensating edge-facet incidences of $P$.

Now assume that $F$ has an edge $e'$ parallel to $u$ other than $e$. In that case, $(e,F)$ and $(e',F)$ compensate and we assign these two edge-facet incidences to a pair of the announced partition. Again, starting with $(e',F')$ instead of $(e,F)$ results in the same pair of edge-facet incidences of $P$. Hence, repeating this process for all the edge-facet incidences of $P$ allows to form a partition of the edge-facet incidences of $P$ into compensating pairs, as desired.
\end{proof}

Theorem \ref{BP.sec.0.thm.2} is obtained by combining Lemmas \ref{BP.sec.4.lem.1} and \ref{BP.sec.4.lem.2}. Let us illustrate this theorem on our examples from Figures~\ref{BP.sec.4.fig.1.5} and~\ref{BP.sec.4.fig.1}. It is well known that a triangular prism is equiprojective \cite{HasanHossainLopez-OrtizNusratQuaderRahman2022,HasanLubiw2008}. This can be immediately be recovered from Theorem~\ref{BP.sec.0.thm.2} by observing that the aggregated cones of a triangular prism at its edge directions are either planes or half-planes as shown in Figure~\ref{BP.sec.4.fig.1.5}. In contrast, a regular dodecahedron is not equiprojective and this can also be obtained from Theorem \ref{BP.sec.0.thm.2}. Indeed, the aggregated cones of a regular dodecahedron at its edge directions, depicted in Figure~\ref{BP.sec.4.fig.1}, are the union of two opposite cones that do not collectively cover the plane that they span.

\section{Equiprojectivity and Minkowski sums}\label{BP.sec.2.5}

In this section, we exploit Theorem \ref{BP.sec.0.thm.2} in order to study the equiprojectivity of Minkowski sums. We will also determine the value of $k$ for which a polytope or a Minkowski sum of polytopes is $k$-equiprojective.

\begin{defn}\label{BP.sec.4.defn.3}
Consider an edge direction $u$ of a polytope $P$ contained in $\mathbb{R}^3$. The \emph{multiplicity} $\mu_P(u)$ of $u$ as an edge direction of $P$ is equal to $2$ when $C_P(u)$ coincides with $u^\perp$ and to $1$ when it does not.
\end{defn}

Note that, in Definition \ref{BP.sec.4.defn.3}, $P$ can be a $3$-dimensional polytope, a polygon or a line segment. From now on, we denote by $\kappa(P)$ the value of $k$ such that an equiprojective polytope $P$ is $k$-equiprojective.

\begin{thm}\label{BP.sec.4.thm.4}
If $P$ is an equiprojective polytope, then
$$
\kappa(P)=\sum\mu_P(u)
$$
where the sum is over the edge directions $u$ of $P$.
\end{thm}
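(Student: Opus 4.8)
The plan is to fix a sufficiently generic line $\ell$, translate ``number of vertices of the projection of $P$ along $\ell$'' into ``number of edges of $P$ whose normal cone is crossed by $\ell^\perp$'', and then group those edges by edge direction, showing that each edge direction $u$ contributes exactly $\mu_P(u)$.

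First I would pick a line $\ell$ whose direction is not parallel to any facet and not parallel to any edge of $P$, and such that, for every edge direction $u$ of $P$, the line $\ell^\perp\cap u^\perp$ misses the relative boundary of each $2$-dimensional normal cone of $P$ contained in $u^\perp$. There are finitely many conditions here, each ruling out only a great circle or a pair of points of directions in $\mathbb{S}^2$, so such an $\ell$ exists; and since $\ell$ is not parallel to a facet, the orthogonal projection $\pi:\mathbb{R}^3\to\ell^\perp$ sends $P$ to a $\kappa(P)$-gon. The key identity is that, for $v\in\ell^\perp\setminus\{0\}$, the face of $\pi(P)$ maximized in direction $v$ equals $\pi(F)$, where $F$ is the face of $P$ maximized in direction $v$, because the support functions of $P$ and of $\pi(P)$ agree at every $v\in\ell^\perp$. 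Hence $v$ lies on a ray of the normal fan of $\pi(P)$ exactly when $\pi(F)$ is an edge of $\pi(P)$, which forces $F$ to be an edge of $P$: it is not a facet since $\ell$ is not parallel to a facet, and it is not a vertex since its image is then $1$-dimensional. Because the normal cones of distinct edges of $P$ have disjoint relative interiors, and because $\ell^\perp\ne u^\perp$ for every edge direction $u$ forces $\ell^\perp$ to meet the relative interior of each normal cone $N_P(e)$ either in the empty set or in a single open ray, I obtain that the number of vertices of $\pi(P)$ equals the number of edges $e$ of $P$ such that $\ell^\perp$ meets the relative interior of $N_P(e)$.

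Next I would partition these edges according to their edge direction. If $e$ is an edge with edge direction $u$ then $N_P(e)\subseteq u^\perp$, so $\ell^\perp$ meets the relative interior of $N_P(e)$ if and only if the line $L:=\ell^\perp\cap u^\perp$ does; moreover $C_P(u)$ is precisely the union of the cones $N_P(e)$ over all edges $e$ of $P$ with edge direction $u$. The line $L$ is the union of two opposite rays, and the genericity of $\ell$ guarantees that neither ray lies on the relative boundary of $C_P(u)$ (which includes the internal walls separating the normal cones of two edges of a common facet). So each of the two rays lies either in the interior of $C_P(u)$ --- in which case it lies in the relative interior of exactly one of the cones $N_P(e)$ --- or in $u^\perp\setminus C_P(u)$. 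Consequently the number of edges with edge direction $u$ whose normal cone is crossed by $\ell^\perp$ equals the number of these two opposite rays contained in the interior of $C_P(u)$. When $\mu_P(u)=2$, that is, $C_P(u)=u^\perp$, both rays qualify and this number is $2$. When $\mu_P(u)=1$, equiprojectivity of $P$ together with \thmref{BP.sec.0.thm.2} tells us that $C_P(u)$ and the relative interior of $-C_P(u)$ partition $u^\perp$; from this partition I would conclude that a ray of $L$ lies in the interior of $C_P(u)$ if and only if the opposite ray does not, so that exactly one of the two rays lies in the interior of $C_P(u)$ and the number is $1$. In both cases this number equals $\mu_P(u)$, and summing over all edge directions $u$ of $P$ gives $\kappa(P)=\sum\mu_P(u)$.

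The part I expect to require the most care is the genericity setup together with the bijection in the first step: one must be sure that $\ell$ can be chosen so that $\pi(P)$ is an honest $\kappa(P)$-gon, that every edge of $\pi(P)$ is matched with exactly one edge of $P$ whose normal cone is crossed by $\ell^\perp$, and that the two rays of $\ell^\perp\cap u^\perp$ avoid every wall of the fan cut out in $u^\perp$ by the cones $N_P(e)$ with edge direction $u$. Once this is in place, the support-function identity handles the first step and the partition property from \thmref{BP.sec.0.thm.2} handles the case $\mu_P(u)=1$, and the remainder is routine.
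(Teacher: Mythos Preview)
Your proof is correct and follows essentially the same route as the paper: both pick a generic plane $H=\ell^\perp$, identify the edges of the projection with the edges $e$ of $P$ whose $N_P(e)$ meets $H$ in its relative interior, group them by edge direction $u$, and use \thmref{BP.sec.0.thm.2} to conclude that each $u$ contributes exactly $\mu_P(u)$. The only real difference is bookkeeping: the paper observes that ``$H$ not orthogonal to any facet'' already suffices for the needed genericity (since the relative boundaries of the $N_P(e)$ are exactly the facet normals, and $H=u^\perp$ would itself force $H$ to contain a facet normal), whereas you impose these conditions separately.
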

\begin{proof}
Consider an equiprojective polytope $P$ contained in $\mathbb{R}^3$ and a plane $H$ through the origin of $\mathbb{R}^3$ that is not orthogonal to any of the facets of $P$. Denote by $\pi:\mathbb{R}^3\rightarrow{H}$ the orthogonal projection on $H$. Since $H$ is not orthogonal to any facet of $P$, a line segment is an edge of $\pi(P)$ if and only if it is the image by $\pi$ of an edge $e$ of $P$ such that the relative interior of $N_P(e)$ intersects $H$. Hence, $\kappa(P)$ is the number of the normal cones of $P$ at its edges whose relative interior is non-disjoint from $H$. Let us count these normal cones of $P$.

We consider an edge direction $u$ of $P$ and count the normal cones of $P$ at its edges parallel to $u$, whose relative interior intersects $H$. Since $H$ is not orthogonal to any facet of $P$, the planes $H$ and $u^\perp$ are distinct and their intersection is a straight line $L$. For the same reason, this line cannot contain any of the normal cones of $P$ at a facet. In other words, $L$ intersects exactly two normal cones of $P$ in their relative interior and these normal cones are at a vertex or at an edge of $P$. According to Theorem \ref{BP.sec.0.thm.2}, two cases are possible: either $C_P(u)$ coincides with $u^\perp$ or the relative interior of $-C_P(u)$ is equal to $u^\perp\mathord{\setminus}C_P(u)$. In the former case, $\mu_P(u)$ is equal to $2$ and $L$ intersects the relative interior of two normal cones of $P$ at distinct edges parallel to $u$. In the latter case, $\mu_P(u)$ is equal to $1$ and $L$ intersects the relative interior of the normal cones of $P$ at an edge parallel to $u$ and at a vertex. Hence, summing $\mu_P(u)$ over the edge directions $u$ of $P$ provides the desired equality.
\end{proof}

Now recall that the faces of the Minkowski sum of two polytopes $P$ and $Q$ can be recovered from the normal cones of these polytopes (see for instance Proposition 7.12 in \cite{Ziegler1995}): the faces of $P+Q$ are precisely the Minkowski sums of a face $F$ of $P$ with a face $G$ of $Q$ such that the relative interiors of $N_P(F)$ and $N_Q(G)$ are non-disjoint. For this reason, Theorem \ref{BP.sec.0.thm.2} provides a convenient way to determine how equiprojectivity behaves under Minkowski sums.

\begin{thm}\label{BP.sec.4.thm.5}
Let $P$ and $Q$ each be an equiprojective polytope, a polygon, or a line segment contained in $\mathbb{R}^3$. The Minkowski sum $P+Q$ is equiprojective if and only if for each edge direction $u$ shared by $P$ and $Q$, either
\begin{itemize}
\item[(i)] $C_P(u)$ or $C_Q(u)$ is equal to $u^\perp$ or
\item[(ii)] $C_P(u)$ coincides with $C_Q(u)$ or with $-C_Q(u)$.
\end{itemize}
\end{thm}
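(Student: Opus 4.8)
The plan is to characterize the aggregated cones of $P+Q$ in terms of those of $P$ and $Q$, and then apply Theorem~\ref{BP.sec.0.thm.2} directly. First I would record the basic facts about edge directions of a Minkowski sum: every edge of $P+Q$ is a Minkowski sum of a face of $P$ with a face of $Q$ whose normal cones have intersecting relative interiors, and such an edge has direction $u$ precisely when one summand is an edge with direction $u$ and the other is a vertex or a parallel edge. Consequently, the set of edge directions of $P+Q$ is the union of those of $P$ and those of $Q$. The key computation is the identity
$$
C_{P+Q}(u)=\bigl(C_P(u)\cap N_Q(\cdot)\bigr)\cup\bigl(N_P(\cdot)\cap C_Q(u)\bigr),
$$
which I would phrase more carefully as follows: a point $v\in u^\perp$ lies in $C_{P+Q}(u)$ if and only if $v$ lies in the normal cone of $P+Q$ at some edge with direction $u$, and using the normal-fan description of Minkowski sums this happens exactly when $v$ lies in a two-dimensional normal cone of $P$ in $u^\perp$ whose relative interior meets a normal cone of $Q$ at a face with $u$ in its normal cone, or symmetrically. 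After sorting out the cases (whether each summand contributes an edge, a parallel edge, or just a vertex in direction $u$), this boils down to the clean statement: if $u$ is an edge direction only of $P$, then $C_{P+Q}(u)=C_P(u)$; if $u$ is an edge direction of both, then $C_{P+Q}(u)$ is the union of $C_P(u)$ and $C_Q(u)$, intersected appropriately, and in fact one checks that $C_{P+Q}(u)=C_P(u)\cup C_Q(u)$ when the common refinement is taken into account, together with the fact that $\mathrm{relint}\bigl(u^\perp\setminus C_{P+Q}(u)\bigr)=\mathrm{relint}\bigl(u^\perp\setminus C_P(u)\bigr)\cap\mathrm{relint}\bigl(u^\perp\setminus C_Q(u)\bigr)$.

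Granting this, the equivalence is essentially bookkeeping on the circle $u^\perp\cap\mathbb{S}^2$. For an edge direction $u$ shared by $P$ and $Q$, each of $P$ and $Q$ individually satisfies condition (i) or (ii) of Theorem~\ref{BP.sec.0.thm.2} (since each summand is equiprojective, a polygon, or a segment, and one checks directly that polygons and segments satisfy the condition: a segment has $C(u)=u^\perp$ for its unique edge direction, and a polygon has $C(u)$ equal to a closed half-plane whose opposite's relative interior completes the partition). So on the circle, each of $C_P(u)$ and $C_Q(u)$ is either the whole circle, or a union of arcs that is ``centrally balanced'' in the precise sense of Theorem~\ref{BP.sec.0.thm.2}(ii). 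The union $C_{P+Q}(u)$ of two such sets is again centrally balanced, or equal to $u^\perp$, if and only if condition (i) or (ii) in the statement of Theorem~\ref{BP.sec.4.thm.5} holds — this is the content of a short case analysis: if one of them is all of $u^\perp$ we land in case (i) trivially; if both are proper half-plane-type cones, their union is balanced exactly when they are equal or opposite, because otherwise the union fails to be the complement (up to relative boundary) of its own opposite. For the edge directions $u$ belonging to only one of $P$, $Q$, the aggregated cone of $P+Q$ at $u$ equals that of the relevant summand, so the Theorem~\ref{BP.sec.0.thm.2} condition is inherited for free; these directions impose no constraint and need not appear in the statement.

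The main obstacle I anticipate is the Minkowski-sum computation of $C_{P+Q}(u)$, specifically controlling which two-dimensional normal cones of $P+Q$ lie in $u^\perp$ and verifying that taking the union of the aggregated cones of the summands is exactly right — there are a few degenerate configurations to handle, such as when $u$ is an edge direction of $P$ but the corresponding face of $Q$ in a normal direction $v\in u^\perp$ is itself an edge (parallel to $u$), so that the edge of $P+Q$ is a genuine Minkowski sum of two parallel edges, and when a normal cone of $P+Q$ at such an edge arises from gluing pieces of $C_P(u)$ and $C_Q(u)$ along a shared ray. I would handle this by passing to the common refinement of the two normal fans restricted to $u^\perp$: the two-dimensional cones of $P+Q$ in $u^\perp$ are exactly the cones of this refinement that are contained in $C_P(u)\cup C_Q(u)$, which immediately yields both the formula for $C_{P+Q}(u)$ and the complementary formula for its relative-interior complement, after which the case analysis above closes the proof.
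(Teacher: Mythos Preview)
Your proposal is correct and follows essentially the same route as the paper: establish the identity $C_{P+Q}(u)=C_P(u)\cup C_Q(u)$ for each edge direction $u$ of $P+Q$, observe that polygons and segments automatically satisfy the hypothesis of Theorem~\ref{BP.sec.0.thm.2}, and then reduce the equivalence to an arc-length argument on the circle $u^\perp\cap\mathbb{S}^2$. The paper obtains the union formula in one line by citing Proposition~7.12 of Ziegler, whereas you plan to derive it by analysing the common refinement of the two normal fans in $u^\perp$; this is the same fact, just unpacked. Your worry about ``degenerate configurations'' (parallel edges in both summands, cones glued along a shared ray) is not an obstacle once you phrase things via the common refinement, and the paper does not pause on these cases either. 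One small remark: when you write ``half-plane-type cones'' in the case analysis you should keep using the general ``centrally balanced'' language you introduced just before, since for an equiprojective $3$-polytope the aggregated cone need not be a single half-plane; the arc-length argument (total measure $\pi$ for each, hence the union has measure $\pi$ or $2\pi$ exactly when they coincide or are opposite) is what the paper uses and what makes the conclusion go through in that generality.
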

\begin{proof}
Observe that according to Proposition 7.12 in \cite{Ziegler1995},
\begin{equation}\label{BP.sec.4.thm.5.eq.1}
C_{P+Q}(u)=C_P(u)\cup{C_Q(u)}
\end{equation}
for every edge direction $u$ of $P+Q$.

First assume that, for each edge direction $u$ of $P+Q$, at least one of the two assertions (i) and (ii) from the statement of the theorem holds. Pick an edge direction $u$ of $P+Q$ such that $C_{P+Q}(u)$ is not equal to $u^\perp$ and let us prove that the relative interior of $-C_{P+Q}(u)$ is equal to $u^\perp\mathord{\setminus}C_{P+Q}(u)$. It will then follow from Theorem~\ref{BP.sec.0.thm.2} that $P+Q$ is an equiprojective polytope.

Note that $u$ must be an edge direction of $P$ or one of $Q$. If $u$ is not an edge direction of both then it follows from (\ref{BP.sec.4.thm.5.eq.1}) that the aggregated cone $C_{P+Q}(u)$ is equal to $C_P(u)$ or to $C_Q(u)$. In that case, Theorem~\ref{BP.sec.0.thm.2} and Remark~\ref{BP.sec.4.rem.1} imply that the relative interior of $-C_{P+Q}(u)$ coincides with $u^\perp\mathord{\setminus}C_{P+Q}(u)$, as desired. If $u$ is an edge direction of both $P$ and $Q$, then by our assumption, either one of the cones $C_P(u)$ and $C_Q(u)$ is equal to $u^\perp$ or the cone $C_P(u)$ coincides with $C_Q(u)$ or with $-C_Q(u)$. However, $C_{P+Q}(u)$ is not equal to $u^\perp$. Therefore, because of (\ref{BP.sec.4.thm.5.eq.1}), this implies that $C_P(u)$ coincides with $C_Q(u)$. In that case, again by (\ref{BP.sec.4.thm.5.eq.1}), $C_{P+Q}(u)$ is equal to $C_P(u)$ and in turn, according to Theorem~\ref{BP.sec.0.thm.2} and Remark~\ref{BP.sec.4.rem.1}, the relative interior of $-C_P(u)$ is equal to $u^\perp\mathord{\setminus}C_P(u)$. It follows that $-C_{P-Q}(u)$ is equal to $u^\perp\mathord{\setminus}C_{P+Q}(u)$, as desired.

Now assume that $P+Q$ is equiprojective and that $u$ is an edge direction shared by $P$ and $Q$ such that neither $C_P(u)$ or $C_Q(u)$ is equal to $u^\perp$. Observe that $C_P(u)\cap\mathbb{S}^2$ is a union of finitely many circular arcs and denote by $\alpha$ the sum of the lengths of these arcs. Likewise, denote by $\beta$ and $\gamma$ the sum of the lengths of the circular arcs contained in $C_P(u)\cap\mathbb{S}^2$ and in $C_{P+Q}(u)$, respectively. Since $P$ and $Q$ each are an equiprojective polytope, a polygon, or a line segment, whose aggregated cones at $u$ are not equal to $u^\perp$, it follows from Theorem~\ref{BP.sec.0.thm.2} and from Remark~\ref{BP.sec.4.rem.1} that the relative interiors of $-C_P(u)$ and $-C_Q(u)$ coincide with $u^\perp\mathord{\setminus}C_P(u)$ and $u^\perp\mathord{\setminus}C_Q(u)$, respectively. Hence, $\alpha$ and $\beta$ are both equal to half the circumference of a unit circle. Also according to Theorem~\ref{BP.sec.0.thm.2}, either $C_{P+Q}(u)$ is equal to $u^\perp$ or the relative interior of $-C_{P+Q}(u)$ coincides with $u^\perp\mathord{\setminus}C_{P+Q}(u)$. Therefore, $\gamma$ is equal to the circumference of a unit circle or to half of it. In the former case, $\gamma$ is equal to the sum of $\alpha$ and $\beta$. By (\ref{BP.sec.4.thm.5.eq.1}), this can only happen when $C_P(u)$ coincides with $-C_Q(u)$. In the latter case, $\alpha$, $\beta$, and $\gamma$ are equal and it follows from (\ref{BP.sec.4.thm.5.eq.1}) that $C_P(u)$ and $C_Q(u)$ coincide.
\end{proof}

Recall that the multiplicity of edge directions is defined for polygons and line segments and not just for $3$\nobreakdash-dimensional polytopes. From now on, if $P$ is a polygon or a line segment contained in $\mathbb{R}^3$, we denote
$$
\kappa(P)=\sum\mu_P(u)
$$
by analogy with Theorem \ref{BP.sec.4.thm.4}, where the sum is over the edge directions $u$ of $P$. It should be noted that when $P$ is a polygon, $\kappa(P)$ is equal to the number of edges of $P$ and when $P$ is a line segment, $\kappa(P)$ is always equal to $2$.

Consider two polytopes $P$ and $Q$ contained in $\mathbb{R}^3$. If each of them is an equiprojective polytope, a polygon, or a line segment, then we denote
$$
\lambda(P,Q)=r+2s
$$
where $s$ is the number of edge directions $u$ common to $P$ and $Q$ such that both $C_P(u)$ and $C_Q(u)$ are equal to $u^\perp$ while $r$ is the number of edge directions $u$ common to $P$ and $Q$ such that the cones $C_P(u)$ and $C_Q(u)$ are not both equal to $u^\perp$ and one of them is contained in the other.

\begin{thm}\label{BP.sec.4.thm.6}
Consider two polytopes $P$ and $Q$ contained in $\mathbb{R}^3$, each of which is an equiprojective polytope, a polygon, or a line segment. If the Minkowski sum $P+Q$ is an equiprojective polytope, then
$$
\kappa(P+Q)=\kappa(P)+\kappa(Q)-\lambda(P,Q)\mbox{.}
$$
\end{thm}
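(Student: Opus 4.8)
The plan is to write each of $\kappa(P)$, $\kappa(Q)$, and $\kappa(P+Q)$ as a sum of edge-direction multiplicities and then compare the three sums term by term. By Theorem~\ref{BP.sec.4.thm.4} when the polytope in question is an equiprojective polytope, and by definition when it is a polygon or a line segment, one has $\kappa(R)=\sum_u\mu_R(u)$ for $R$ among $P$, $Q$, and $P+Q$, the sum ranging over the edge directions $u$ of $R$; it is convenient to set $\mu_R(u)=0$ when $u$ is not an edge direction of $R$. Recall from Proposition~7.12 in \cite{Ziegler1995} (see equation~(\ref{BP.sec.4.thm.5.eq.1})) that $C_{P+Q}(u)=C_P(u)\cup{C_Q(u)}$ for every $u$, so the edge directions of $P+Q$ are precisely those $u$ that are edge directions of $P$ or of $Q$. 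Thus it suffices to establish, for every edge direction $u$ of $P+Q$, the local identity
$$
\mu_{P+Q}(u)=\mu_P(u)+\mu_Q(u)-c(u)\mbox{,}
$$
where $c(u)$ denotes the contribution of $u$ to $\lambda(P,Q)$: namely $c(u)=2$ if both $C_P(u)$ and $C_Q(u)$ equal $u^\perp$ (the directions counted by $k'$), $c(u)=1$ if one of $C_P(u)$, $C_Q(u)$ is distinct from $u^\perp$ and contained in the other (the directions counted by $k$), and $c(u)=0$ otherwise. Summing this identity over all edge directions of $P+Q$ and recalling that $\lambda(P,Q)=k+2k'$ will give the theorem.

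To prove the local identity I would split on how $C_P(u)$ and $C_Q(u)$ relate to $u^\perp$, using throughout that $\mu_R(u)=2$ exactly when $C_R(u)=u^\perp$. If $u$ is an edge direction of only one summand, say $P$, then $C_Q(u)$ is empty, so $C_{P+Q}(u)=C_P(u)$, hence $\mu_{P+Q}(u)=\mu_P(u)$, while $\mu_Q(u)=0$ and $c(u)=0$. If $u$ is shared and $C_P(u)=C_Q(u)=u^\perp$, then $C_{P+Q}(u)=u^\perp$, so $\mu_{P+Q}(u)=2$, and the right-hand side is $2+2-2$. If $u$ is shared and exactly one aggregated cone, say $C_P(u)$, equals $u^\perp$, then $C_{P+Q}(u)=u^\perp$ again, so $\mu_{P+Q}(u)=2$; here $C_Q(u)\neq u^\perp$ is contained in $C_P(u)=u^\perp$, so $u$ is counted by $k$ and the right-hand side is $2+1-1$.

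The remaining case — $u$ shared, with neither $C_P(u)$ nor $C_Q(u)$ equal to $u^\perp$ — is the one needing care, and I expect it to be the main obstacle. Since $P+Q$ is equiprojective, Theorem~\ref{BP.sec.4.thm.5}(ii) forces $C_P(u)$ to coincide with $C_Q(u)$ or with $-C_Q(u)$. If $C_P(u)=C_Q(u)$, then $C_{P+Q}(u)=C_P(u)\neq u^\perp$, so $\mu_{P+Q}(u)=1$; as $C_P(u)\neq u^\perp$ is contained in $C_Q(u)$, the direction $u$ is counted by $k$ and the right-hand side is $1+1-1$. If instead $C_P(u)=-C_Q(u)$ with $C_P(u)\neq C_Q(u)$, then neither $P$ nor $Q$ can be a line segment here (a line segment has a single edge direction $w$, at which its aggregated cone is the whole plane $w^\perp$), so each of $P$, $Q$ is a polygon or an equiprojective polytope; in either case $C_P(u)$ is a half-plane (by the remark on polygons and line segments) or else $C_P(u)$ together with the relative interior of $-C_P(u)$ partitions $u^\perp$ (by Theorem~\ref{BP.sec.0.thm.2}(ii) applied to $P$), so in all cases $C_P(u)\cup(-C_P(u))=u^\perp$. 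Hence $C_{P+Q}(u)=C_P(u)\cup{C_Q(u)}=C_P(u)\cup(-C_P(u))=u^\perp$, giving $\mu_{P+Q}(u)=2$; and since the two-dimensional cones $C_P(u)$ and $C_Q(u)=-C_P(u)$ have disjoint relative interiors and neither equals $u^\perp$, neither is contained in the other, so $u$ is counted by neither $k$ nor $k'$ and $c(u)=0$, making the right-hand side $1+1-0$.

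Two small checks round things off: that the three possibilities defining $c(u)$ are mutually exclusive and exhaustive for a shared edge direction (immediate from their statements, since $C_P(u)=u^\perp=C_Q(u)$ is incompatible with either cone being ``distinct from $u^\perp$''), and that Theorem~\ref{BP.sec.0.thm.2} does apply to $P$ and to $Q$ whenever these are three-dimensional, which holds because they are then equiprojective by hypothesis. Summing the local identity over all edge directions of $P+Q$ then yields $\kappa(P+Q)=\kappa(P)+\kappa(Q)-\lambda(P,Q)$. The delicate point, as indicated, is the $C_P(u)=-C_Q(u)$ subcase: one must use the structural constraint of Theorem~\ref{BP.sec.0.thm.2} (or the polygon/segment remark) simultaneously to see that the union of the two aggregated cones fills $u^\perp$, so the multiplicity jumps to $2$, and that such a direction contributes nothing to $\lambda(P,Q)$, so that the arithmetic $2=1+1-0$ is exactly consistent.
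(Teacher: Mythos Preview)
Your proof is correct and follows essentially the same route as the paper's: both write $\kappa$ as a sum of multiplicities via Theorem~\ref{BP.sec.4.thm.4}, use $C_{P+Q}(u)=C_P(u)\cup C_Q(u)$, and verify the local identity by the same case split dictated by Theorem~\ref{BP.sec.4.thm.5}. The only cosmetic differences are that the paper merges your two ``containment'' subcases (exactly one cone equals $u^\perp$, and neither does but $C_P(u)=C_Q(u)$) into a single line $\mu_{P+Q}(u)=\max\{\mu_P(u),\mu_Q(u)\}=\mu_P(u)+\mu_Q(u)-1$, and that you spell out more explicitly why $C_P(u)\cup(-C_P(u))=u^\perp$ in the opposite case.
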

\begin{proof}
Assume that $P+Q$ is equiprojective and consider an edge direction $u$ of $P+Q$. Note that $u$ is then an edge direction of $P$ or an edge direction of $Q$. 
It follows from Proposition 7.12 in \cite{Ziegler1995} that
$$
C_{P+Q}(u)=C_P(u)\cup{C_Q(u)}\mbox{.}
$$

Hence, if $u$ is an edge direction of $P$ but not one of $Q$, then
\begin{equation}\label{BP.sec.4.thm.6.eq.1}
\mu_{P+Q}(u)=\mu_P(u)
\end{equation}
and if $u$ is an edge direction of $Q$ but not $P$, then
\begin{equation}\label{BP.sec.4.thm.6.eq.2}
\mu_{P+Q}(u)=\mu_Q(u)\mbox{.}
\end{equation}

If $P$ and $Q$ share $u$ as an edge direction, we review the different possibilities given by Theorem \ref{BP.sec.4.thm.5} for how $\mu_{P+Q}(u)$, $\mu_P(u)$ and $\mu_Q(u)$ relate to one another. If both the aggregated cones $C_P(u)$ and $C_Q(u)$ are equal to $u^\perp$, then $\mu_P(u)$, $\mu_Q(u)$ and $\mu_{P+Q}(u)$ are all equal to $2$. Hence,
\begin{equation}\label{BP.sec.4.thm.6.eq.3}
\mu_{P+Q}(u)=\mu_P(u)+\mu_Q(u)-2\mbox{.}
\end{equation}

If $C_P(u)$ and $C_Q(u)$ are not both equal to $u^\perp$, but one of these aggregated cones is contained in the other, then
$$
\mu_{P+Q}(u)=\mathrm{max}\{\mu_P(u),\mu_Q(u)\}\mbox{.}
$$

Moreover, the smallest value between $\mu_P(u)$ and $\mu_Q(u)$ is equal to $1$. Hence,
\begin{equation}\label{BP.sec.4.thm.6.eq.4}
\mu_{P+Q}(u)=\mu_P(u)+\mu_Q(u)-1\mbox{.}
\end{equation}

Finally, if $C_P(u)$ and $C_Q(u)$ are opposite and not equal to $u^\perp$, then $\mu_P(u)$ and $\mu_Q(u)$ are both equal to $1$ while $\mu_{P+Q}(u)$ is equal to $2$. Therefore,
\begin{equation}\label{BP.sec.4.thm.6.eq.5}
\mu_{P+Q}(u)=\mu_P(u)+\mu_Q(u)\mbox{.}
\end{equation}

The result is then obtained from Theorem \ref{BP.sec.4.thm.4} by summing (\ref{BP.sec.4.thm.6.eq.1}), (\ref{BP.sec.4.thm.6.eq.2}), (\ref{BP.sec.4.thm.6.eq.3}), (\ref{BP.sec.4.thm.6.eq.4}), and (\ref{BP.sec.4.thm.6.eq.5}) when $u$ ranges over the corresponding edge directions of $P+Q$.
\end{proof}

Observe that when two polytopes $P$ and $Q$ do not share an edge direction, $\lambda(P,Q)$ vanishes. Hence, we obtain the following corollary from Theorems \ref{BP.sec.4.thm.5} and \ref{BP.sec.4.thm.6}. In turn, that corollary immediately implies Theorem \ref{BP.sec.0.thm.3}. 

\begin{cor}\label{BP.sec.4.cor.1}
Consider two polytopes $P$ and $Q$ contained in $\mathbb{R}^3$, each of which is an equiprojective polytope, a polygon, or a line segment. If $P$ and $Q$ do not share an edge direction, then $P+Q$ is equiprojective and
$$
\kappa(P+Q)=\kappa(P)+\kappa(Q)\mbox{.}
$$
\end{cor}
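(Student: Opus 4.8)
The plan is to derive this directly from Theorems \ref{BP.sec.4.thm.5} and \ref{BP.sec.4.thm.6}, which have already carried out the substantive work; the corollary is essentially a bookkeeping consequence. First I would record the trivial but crucial observation that, since $P$ and $Q$ share no edge direction, the set of edge directions $u$ common to $P$ and $Q$ is empty. Consequently, the condition appearing in Theorem \ref{BP.sec.4.thm.5}—namely that for each such shared $u$, either $C_P(u)$ or $C_Q(u)$ equals $u^\perp$, or $C_P(u)$ coincides with $C_Q(u)$ or with $-C_Q(u)$—is vacuously satisfied. Hence Theorem \ref{BP.sec.4.thm.5} immediately yields that $P+Q$ is equiprojective.

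Next I would turn to the value of $\kappa$. By the definition of $\lambda(P,Q)$ given just before Theorem \ref{BP.sec.4.thm.6}, the two quantities $k$ and $k'$ entering $\lambda(P,Q)=k+2k'$ both count edge directions common to $P$ and $Q$; as there are none, $k=k'=0$ and therefore $\lambda(P,Q)=0$. Since we have just shown that $P+Q$ is an equiprojective polytope, Theorem \ref{BP.sec.4.thm.6} applies and gives
$$
\kappa(P+Q)=\kappa(P)+\kappa(Q)-\lambda(P,Q)=\kappa(P)+\kappa(Q)\mbox{,}
$$
which is the asserted identity. Invoking Theorem \ref{BP.sec.0.thm.3} is then a matter of specialising to the case where each summand is a polygon, as is noted in the surrounding text.

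There is essentially no obstacle: the whole content of the proof is to notice that the hypothesis "$P$ and $Q$ share no edge direction" simultaneously makes the criterion of Theorem \ref{BP.sec.4.thm.5} vacuous and forces $\lambda(P,Q)$ to vanish. The only point that might deserve a word is the degenerate situation in which $P+Q$ fails to be $3$-dimensional—for instance when both summands are line segments, or a segment and a parallel polygon—so that it is not an equiprojective polytope in the sense used here; this case lies outside the scope of the statement and can be dismissed in a single line, since Theorems \ref{BP.sec.4.thm.5} and \ref{BP.sec.4.thm.6} are applied precisely when $P+Q$ is a polytope.
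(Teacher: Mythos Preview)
Your proposal is correct and follows exactly the paper's approach: the paper simply observes that when $P$ and $Q$ share no edge direction, $\lambda(P,Q)$ vanishes, and then invokes Theorems~\ref{BP.sec.4.thm.5} and~\ref{BP.sec.4.thm.6}. Your extra remark about the degenerate case where $P+Q$ is not $3$-dimensional is a harmless addition that the paper leaves implicit.
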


\section{Many $k$-equiprojective polytopes when $k$ is odd}\label{BP.sec.3}

The goal of this section is to build many $k$-equiprojective polytopes when $k$ is a large enough odd integer. This will be done using Minkowski sums between zonotopes with $(k-3)/2$ generators and well-chosen triangles. In this section, all the considered polytopes are contained in $\mathbb{R}^3$.

For any $3$-dimensional zonotope $Z$, we consider a triangle $t_Z$ whose edge directions do not belong to any of the planes spanned by two edge directions of $Z$ and such that the plane spanned by the edge directions of $t_Z$ does not contain any edge direction of $Z$. Note that such a triangle always exists because $Z$ and $t_Z$ have only finitely many edge directions. A consequence of these requirements is that $t_Z$ does not share any edge direction with $Z$. Moreover, the normal cones of $Z$ at its facets are never contained in the normal cone of $t_Z$ at itself or in the plane spanned by the normal cone of $t_Z$ at one of its edges (see Figure \ref{BP.sec.3.fig.1} for an illustration of the normal cones of a triangle at its edges and at itself). Inversely, the normal cone of $t_Z$ at itself is not contained in the plane spanned by the normal cone of $Z$ at any of its edges. It should be noted that there are many possible choices for $t_Z$ but we fix that choice for each zonotope $Z$ so that $Z\mapsto{t_Z}$ defines a map that sends a zonotope to a triangle.
\begin{figure}[b]
\begin{centering}
\includegraphics[scale=1]{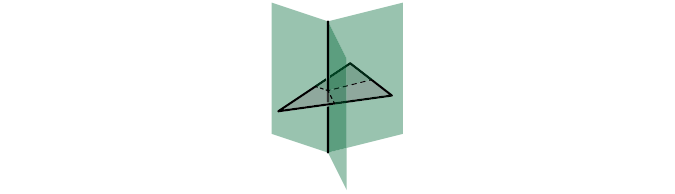}
\caption{A triangle in $\mathbb{R}^3$ (colored gray) and its normal cones at edges (colored green) and at itself (black vertical line).}\label{BP.sec.3.fig.1}
\end{centering}
\end{figure}

It follows from the results of Section \ref{BP.sec.2.5} that the Minkowski sum of $Z$ with $t_Z$ is always an equiprojective polytope.

\begin{lem}\label{BP.sec.3.lem.0.5}
If $Z$ is a $3$-dimensional zonotope with $n$ generators, then its Minkowski sum with $t_Z$ is a $(2n+3)$\nobreakdash-equiprojective polytope.
\end{lem}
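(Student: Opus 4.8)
The plan is to obtain this as an immediate application of Corollary~\ref{BP.sec.4.cor.1}. First I would record the two ingredients about the summands. By Proposition~\ref{BP.sec.1.prop.1}, the zonotope $Z$ is an equiprojective polytope, and since it has $n$ generators that proposition gives $\kappa(Z)=2n$. The triangle $t_Z$ is a polygon, so by the remark following Theorem~\ref{BP.sec.4.thm.6} (a polygon $P$ has $\kappa(P)$ equal to its number of edges) we have $\kappa(t_Z)=3$.

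Next I would invoke the construction of $t_Z$ described just before the statement: its edge directions avoid every plane spanned by two edge directions of $Z$, and the planes spanned by its edge directions contain no edge direction of $Z$. In particular $t_Z$ and $Z$ have no common edge direction. Each of $Z$ and $t_Z$ is therefore one of the objects allowed in Corollary~\ref{BP.sec.4.cor.1} ($Z$ an equiprojective polytope, $t_Z$ a polygon), and they do not share an edge direction, so that corollary yields that $Z+t_Z$ is equiprojective with $\kappa(Z+t_Z)=\kappa(Z)+\kappa(t_Z)=2n+3$.

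The only point needing a word of justification is that $Z+t_Z$ is genuinely a $3$-dimensional polytope, so that equiprojectivity is meaningful for it; this is automatic because $Z$ is already $3$-dimensional and Minkowski summation cannot lower dimension. I do not expect any real obstacle here: all the substantive content lies upstream, in Theorem~\ref{BP.sec.0.thm.2}, Theorem~\ref{BP.sec.4.thm.4}, Theorem~\ref{BP.sec.4.thm.5}, and Theorem~\ref{BP.sec.4.thm.6}, which combine into Corollary~\ref{BP.sec.4.cor.1}; the present lemma is just the specialization to the pair $(Z,t_Z)$. If one preferred a self-contained argument one could instead verify condition~(i) or~(ii) of Theorem~\ref{BP.sec.0.thm.2} for $Z+t_Z$ directly, using $C_{Z+t_Z}(u)=C_Z(u)\cup C_{t_Z}(u)$ together with the fact that for an edge direction $u$ belonging to only one summand the aggregated cone of $Z+t_Z$ at $u$ coincides with that of the relevant summand, but routing through Corollary~\ref{BP.sec.4.cor.1} is cleaner.
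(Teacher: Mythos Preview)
Your proposal is correct and follows essentially the same route as the paper: it invokes Proposition~\ref{BP.sec.1.prop.1} for $\kappa(Z)=2n$, the remark that $\kappa$ of a polygon equals its number of edges for $\kappa(t_Z)=3$, the construction of $t_Z$ to rule out common edge directions, and then Corollary~\ref{BP.sec.4.cor.1} to conclude. Your added remark about the $3$-dimensionality of $Z+t_Z$ is a harmless extra clarification.
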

\begin{proof}
Recall that the edge directions of $Z+t_Z$ are precisely the edge directions of $Z$ and the edge directions of $t_Z$. 
By Proposition \ref{BP.sec.1.prop.1}, $Z$ is $2n$-equiprojective where $n$ is the number of generators of $Z$.

Now recall that when $P$ is a polygon, $\kappa(P)$ is equal to the number of edges of $P$. As a consequence, $\kappa(t_Z)$ is equal to $3$ and since $Z$ does not share an edge direction with $t_Z$, it follows from Corollary~\ref{BP.sec.4.cor.1} that the Minkowski sum of $Z$ with $t_Z$ is a $(2n+3)$-equiprojective polytope.
\end{proof}

Recall that the normal cones of a polytope $P$ ordered by reverse inclusion collectively form a lattice $\mathcal{N}(P)$, called the \emph{normal fan of $P$} and that
$$
N_P:\mathcal{F}(P)\rightarrow\mathcal{N}(P)
$$
is an isomorphism (see for instance Exercise 7.1 in \cite{Ziegler1995}). Note that in this context, an \emph{isomorphism} is a bijective morphism between two lattices. It will be important to keep in mind that $N_P$ reverses inclusion. This correspondence between the face lattice and the normal fan of a polytope will be useful in order to determine the combinatorial type of the Minkowski sum between a $3$-dimensional zonotope $Z$ and its associated triangle $t_Z$.

Further recall that a face $F$ of $Z+t_Z$ is uniquely written as the Minkowski sum of a face of $Z$ with a face of $t_Z$. We will denote by $\tau_Z(F)$ the face of $t_Z$ that appears in this Minkowski sum. Note that $\tau_Z$ defines a morphism from the face lattice of $Z+t_Z$ to that of $t_Z$ in the sense that it preserves face inclusion. This is a consequence, for instance of Proposition~7.12 from \cite{Ziegler1995}.

The remainder of the section is mostly devoted to proving that if $Z+t_Z$ and $Z'+t_{Z'}$ have the same combinatorial type, then so do $Z$ and $Z'$. This will be done via two lemmas. The first of these lemmas is the following.
\begin{lem}\label{BP.sec.3.lem.1}
Consider two $3$-dimensional zonotopes $Z$ and $Z'$. If
$$
\psi:\mathcal{F}(Z+t_Z)\rightarrow\mathcal{F}(Z'+t_{Z'})
$$
is an isomorphism, then there is a commutative diagram

$$
\begin{tikzcd}
\mathcal{F}(Z+t_Z) \arrow[r,"\psi"] \arrow[d,"\tau_Z"'] &
\mathcal{F}(Z'+t_{Z'}) \arrow[d,"\tau_{Z'}"']
\\
\mathcal{F}(t_Z) \arrow[r,"\phi"] &
\mathcal{F}(t_{Z'})
\end{tikzcd}
$$
where $\phi$ is an isomorphism. 
\end{lem}
\begin{proof}
Assume that $\psi$ is an isomorphism from the face lattice of $Z+t_Z$ to that of $Z'+t_{Z'}$. Observe that $Z+t_Z$ has exactly two parallel triangular facets each of which is a translate of $t_Z$. Further observe that all the other facets of $Z+t_Z$ are centrally-symmetric, since they are the Minkowski sum of a centrally-symmetric polygon with a line segment or a point. The same observations hold for the Minkowski sum of $Z'$ with $t_{Z'}$: that Minkowski sum has exactly two parallel triangular facets, each a translate of  $t_{Z'}$ and its other facets all are centrally-symmetric. As $\psi$ induces an isomorphism between the face lattices of any facet $F$ of $Z+t_Z$ and the face lattice of $\psi(F)$, this shows that $\psi$ sends the two triangular facets of $Z+t_Z$ to the two triangular facets of $Z'+t_{Z'}$. Moreover, two parallel edges of a centrally-symmetric facet of $Z+t_Z$ are sent by $\psi$ to two parallel edges of a centrally-symmetric facet of $Z'+t_{Z'}$.

Recall that all the aggregated cones of $t_Z$ are half-planes. As $Z$ and $t_Z$ do not have any edge direction in common, the aggregated cones of $Z+t_Z$ at the edge directions of $t_Z$ are still half-planes. Note that the two half-lines that bound these half-planes are precisely the normal cones of $Z+t_Z$ at its triangular facets. Similarly, the aggregated cones of $Z'+t_{Z'}$ at the edge directions of $t_{Z'}$ are half-planes bounded by the normal cones of $Z'+t_{Z'}$ at its triangular facets. Since two parallel edges of a centrally-symmetric facet of $Z+t_Z$ are sent by $\psi$ to two parallel edges of a centrally-symmetric facet of $Z'+t_{Z'}$, this implies that for every edge $e$ of $t_Z$, there exists an edge $\phi(e)$ of $t_{Z'}$ such that any face of $Z+t_Z$ contained in $\tau_Z^{-1}(\{e\})$ is sent to $\phi(e)$ by $\tau_{Z'}\circ\psi$.

By the correspondence between face lattices and normal fans,
$$
\overline{\psi}=N_{Z'+t_{Z'}}\circ\psi\circ{N_{Z+t_Z}^{-1}}
$$
provides an isomorphism from $\mathcal{N}(Z+t_Z)$ to $\mathcal{N}(Z'+t_{Z'})$. Consider two edges $e$ and $f$ of $t_Z$ and denote by $x$ the vertex they share. Let $\mathcal{P}$ be the set of the normal cones of $Z+t_Z$ at its two triangular faces and at the faces contained in $\tau_Z^{-1}(\{e\})$ and in $\tau_Z^{-1}(\{f\})$. Similarly, let $\mathcal{P}'$ denote the set made up of the normal cones of $Z'+t_{Z'}$ at its triangular faces and at the faces from $\tau_{Z'}^{-1}(\{\phi(e)\})$ and $\tau_{Z'}^{-1}(\{\phi(f)\})$. By the above, $\overline{\psi}(\mathcal{P})$ is equal to $\mathcal{P}'$. Moreover, it follows from Proposition~7.12 in \cite{Ziegler1995} that $\mathcal{P}$ and $\mathcal{P}'$ are polyhedral decompositions of the boundaries of the normal cone of $t_Z$ at $x$ and of the normal cone of $t_{Z'}$ at the vertex $\phi(x)$ shared by $\phi(e)$ and $\phi(f)$. Observe that the boundaries of these normal cones (that are depicted in Figure \ref{BP.sec.3.fig.1}) each separate $\mathbb{R}^3$ into two connected components. As $\overline{\psi}$ is an isomorphism from $\mathcal{N}(Z+t_Z)$ to $\mathcal{N}(Z'+t_{Z'})$, it must then send the normal cones of $Z+t_Z$ at the faces from $\tau_Z^{-1}(\{x\})$ to the normal cones of $Z'+t_{Z'}$ at the faces from $\tau_{Z'}^{-1}(\{\phi(x)\})$. In other words, any face of $Z+t_Z$ contained in $\tau_Z^{-1}(\{x\})$ is sent to $\phi(x)$ by $\tau_{Z'}\circ\psi$. Hence, setting $\phi(t_Z)$ to $t_{Z'}$ results in the desired isomorphism $\phi$.
\end{proof}

Recall that, for an arbitrary $3$-dimensional zonotope $Z$, a face $F$ of $Z+t_Z$ is the Minkowski sum of a unique face of $Z$ with $\tau_Z(F)$. We will denote by $\zeta_Z(F)$ the face of $Z$ that appears in this sum. This defines a morphism $\zeta_Z$ from $\mathcal{F}(Z+t_Z)$ to $\mathcal{F}(Z)$. We will derive a statement similar to Lemma \ref{BP.sec.3.lem.1} regarding $\zeta_Z$. In order to do that, we first establish the following proposition as a consequence of our requirements for the choice of $t_Z$.

\begin{prop}\label{BP.sec.3.prop.1.5}
Consider a $3$-dimensional zonotope $Z$ and two proper faces $F$ and $G$ of $Z+t_Z$. If $F$ is a facet of $G$, then either
\begin{enumerate}
\item[(i)] $\zeta_Z(G)$ coincides with $\zeta_Z(F)$ or
\item[(ii)] $\tau_Z(G)$ coincides with $\tau_Z(F)$.
\end{enumerate}
\end{prop}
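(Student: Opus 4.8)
The plan is to work on the level of normal fans, where the Minkowski sum structure is transparent. By the correspondence between faces and normal cones, $F$ being a facet of $G$ means $N_{Z+t_Z}(G)$ is a facet (a codimension-one face) of $N_{Z+t_Z}(F)$. By Proposition~7.12 of \cite{Ziegler1995}, writing $F = \zeta_Z(F) + \tau_Z(F)$ and $G = \zeta_Z(G) + \tau_Z(G)$, we have $N_{Z+t_Z}(F) = N_Z(\zeta_Z(F)) \cap N_{t_Z}(\tau_Z(F))$ and likewise for $G$, and moreover $\zeta_Z(G)$ is a face of $\zeta_Z(F)$ while $\tau_Z(G)$ is a face of $\tau_Z(F)$ (since $\zeta_Z$ and $\tau_Z$ are morphisms). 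So the task is to rule out the situation where both inclusions $\zeta_Z(G) \subsetneq \zeta_Z(F)$ and $\tau_Z(G) \subsetneq \tau_Z(F)$ are strict.

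The key point is a dimension count exploiting the genericity built into the choice of $t_Z$. When $\zeta_Z(G) \subsetneq \zeta_Z(F)$ strictly, the normal cone $N_Z(\zeta_Z(G))$ has dimension at least one more than $N_Z(\zeta_Z(F))$; similarly a strict inclusion $\tau_Z(G) \subsetneq \tau_Z(F)$ forces $\dim N_{t_Z}(\tau_Z(G)) \geq \dim N_{t_Z}(\tau_Z(F)) + 1$. If both inclusions were strict, then passing from $F$ to $G$ we would be intersecting with two subspaces/cones each of which genuinely cuts down the dimension, so we would expect $\dim N_{Z+t_Z}(G) \leq \dim N_{Z+t_Z}(F) - 2$, contradicting that $G$ is a \emph{facet} of $F$ (a drop of exactly one in dimension). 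To make this rigorous one uses that the defining requirements on $t_Z$ — its edge directions avoid every plane spanned by two edge directions of $Z$, its edge-direction planes contain no edge direction of $Z$, the normal cones of $Z$ at facets are not contained in $N_{t_Z}(t_Z)$ nor in the planes spanned by $N_{t_Z}(e)$ for edges $e$ of $t_Z$, and symmetrically — guarantee that the relevant cones of $Z$ and $t_Z$ are in sufficiently general position that the dimension of the intersection $N_Z(\zeta_Z(\cdot)) \cap N_{t_Z}(\tau_Z(\cdot))$ behaves additively in codimension. Concretely, $\mathrm{codim}\, N_{Z+t_Z}(F) = \mathrm{codim}\, N_Z(\zeta_Z(F)) + \mathrm{codim}\, N_{t_Z}(\tau_Z(F))$ whenever the right-hand side is at most $3$, by the genericity of $t_Z$; and this additive identity, applied to both $F$ and $G$, immediately yields $\mathrm{codim}\,N_{Z+t_Z}(G) - \mathrm{codim}\,N_{Z+t_Z}(F) = (\text{change in }\zeta\text{-codim}) + (\text{change in }\tau\text{-codim})$, where each summand is a nonnegative integer and their sum is $1$; hence one of them is $0$, which is exactly alternative (i) or (ii).

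The main obstacle I anticipate is handling the degenerate/boundary cases of the additivity of codimension: one must check that the genericity hypotheses on $t_Z$ really do prevent, say, the $2$-dimensional cone $N_Z(e)$ for an edge $e$ of $Z$ from lying inside the plane spanned by $N_{t_Z}(e')$ for an edge $e'$ of $t_Z$ (which is ruled out by the explicit requirement), and similarly that no normal cone of $Z$ at a facet lies in $N_{t_Z}(t_Z)$ or in those planes. These are precisely the configurations that would let the intersection drop by less than the sum of the codimensions, i.e.\ would allow a "shared" drop. Once those finitely many bad configurations are excluded — and they are, by the construction of $t_Z$ recalled at the start of Section~\ref{BP.sec.3} — the codimension identity holds and the proposition follows. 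An alternative, more hands-on route is to enumerate the face types of $Z + t_Z$ directly: its facets are the two triangular translates of $t_Z$, the "prism-like" facets $e_Z + e_{t_Z}$ (edge of $Z$ plus edge of $t_Z$), and translates of the centrally-symmetric $2$-faces of $Z$; then chase which faces $F$ can be facets of which faces $G$ and verify (i) or (ii) case by case. I would present the normal-fan dimension argument as the main proof since it is uniform, and mention the case analysis only as a sanity check.
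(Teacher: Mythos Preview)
Your argument is correct, but note a systematic slip in the direction of inclusions: since $\zeta_Z$ and $\tau_Z$ preserve face inclusion and $F\subset G$, one has $\zeta_Z(F)\subseteq\zeta_Z(G)$ and $\tau_Z(F)\subseteq\tau_Z(G)$, not the reverse as you write. The dimension count is unaffected once this is fixed: the genericity conditions on $t_Z$ force $\dim H=\dim\zeta_Z(H)+\dim\tau_Z(H)$ for every proper face $H$ of $Z+t_Z$, and then $\dim G-\dim F=1$ together with the two nonnegative differences summing to $1$ gives the conclusion.

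The route differs from the paper's. The paper carries out precisely the ``hands-on'' enumeration you mention only as a sanity check: it classifies the facets of $Z+t_Z$ as (facet of $Z$)${}+{}$(vertex of $t_Z$), (vertex of $Z$)${}+t_Z$, or (edge of $Z$)${}+{}$(edge of $t_Z$), classifies the edges analogously, and then verifies (i) or (ii) by inspection in each of the resulting cases. Your normal-fan codimension identity packages all of this into one line and explains \emph{why} the case analysis succeeds---the genericity hypotheses on $t_Z$ are exactly what rule out the non-additive face types such as (edge of $Z$)${}+t_Z$ or (facet of $Z$)${}+{}$(edge of $t_Z$). Your version is more uniform and would generalise more readily; the paper's explicit enumeration has the compensating virtue of listing the face types of $Z+t_Z$ concretely, which is in any case needed nearby in the proof of Lemma~\ref{BP.sec.3.lem.1}.
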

\begin{proof}
Let us first assume that $G$ is a facet of $Z+t_Z$ and $F$ an edge of $G$. It follows from our choice for $t_Z$ that every polygonal face of $Z+t_Z$ is the Minkowski sum of a facet of $Z$ with a vertex of $t_Z$, of a vertex of $Z$ with $t_Z$ itself, or of an edge of $Z$ with an edge of $t_Z$. If $\zeta_Z(G)$ is a polygon and $\tau_Z(G)$ a vertex of $t_Z$ then it is immediate that $F$ is the Minkowski sum of an edge of $\zeta_Z(G)$ with $\tau_Z(G)$ and therefore, $\tau_Z(F)$ coincides with $\tau_Z(G)$. Similarly, if $\zeta_Z(G)$ is a vertex of $Z$ and $\tau_Z(G)$ is equal to $t_Z$, then $\tau_Z(F)$ must be an edge of $t_Z$ and $\zeta_Z(F)$ is equal to $\zeta_Z(G)$. Now if $\zeta_Z(G)$ is an edge of $Z$ and $\tau_Z(G)$ an edge of $t_Z$, then observe that $G$ is a parallelogram. As $F$ is an edge of $G$, it must be a translate of either $\zeta_Z(G)$ or $\tau_Z(G)$. In the former case, $\zeta_Z(F)$ is equal to $\zeta_Z(G)$ and in the latter, $\tau_Z(F)$ is equal to $\tau_Z(G)$, as desired.

Finally, assume that $G$ is an edge of $Z+t_Z$ and $F$ a vertex of $G$. Recall that $Z$ and $t_Z$ do not share an edge direction. As a consequence, either $\zeta_Z(G)$ is an edge of $Z$ and $\tau_Z(G)$ a vertex of $t_Z$ or inversely, $\zeta_Z(G)$ is a vertex of $Z$ and $\tau_Z(G)$ an edge of $t_Z$. In the former case, $\tau_Z(F)$ is equal to $\tau_Z(G)$ and in the latter $\zeta_Z(F)$ coincides with $\zeta_Z(G)$, which completes the proof.
\end{proof}

We can now prove the following statement similar to that of Lemma \ref{BP.sec.3.lem.1}.

\begin{lem}\label{BP.sec.3.lem.1.5}
Consider two $3$-dimensional zonotopes $Z$ and $Z'$. If
$$
\psi:\mathcal{F}(Z+t_Z)\rightarrow\mathcal{F}(Z'+t_{Z'})
$$
is an isomorphism, then there is a commutative diagram
$$
\begin{tikzcd}
\mathcal{F}(Z+t_Z) \arrow[r,"\psi"] \arrow[d,"\zeta_Z"'] &
\mathcal{F}(Z'+t_{Z'}) \arrow[d,"\zeta_{Z'}"']
\\
\mathcal{F}(Z) \arrow[r,"\theta"] &
\mathcal{F}(Z')
\end{tikzcd}
$$
where $\theta$ is an isomorphism.
\end{lem}
\begin{proof}
Consider a proper face $F$ of $Z$ and let us show that all the faces of $Z+t_Z$ contained in $\zeta_Z^{-1}(\{F\})$ have the same image by $\zeta_{Z'}\circ\psi$. Assume for contradiction that this is not the case and recall that by Proposition~7.12 from~\cite{Ziegler1995}, the normal cone $N_Z(F)$ is decomposed into a polyhedral complex by the normal cones of $Z+t_Z$ it contains. Since $N_Z(F)$ is convex and
$$
N_{Z'+t_{Z'}}:\mathcal{F}(Z'+t_{Z'})\rightarrow\mathcal{N}(Z'+t_{Z'})
$$
is an isomorphism, $\zeta_Z^{-1}(\{F\})$ must contain two faces $P$ and $Q$ whose images by $\zeta_{Z'}\circ\psi$ differ and such that the normal cone of $Z+t_Z$ at $Q$ is a facet of the normal cone of $Z+t_Z$ at $P$. By the correspondence between the face lattice of a polytope and its normal fan, $P$ is a facet of $Q$. Now recall that
$$
P=F+\tau_Z(P)
$$
and
$$
Q=F+\tau_Z(Q)\mbox{.}
$$

It follows that $\tau_Z(P)$ must differ from $\tau_Z(Q)$ as $P$ and $Q$ would otherwise be equal. Hence, by Lemma~\ref{BP.sec.3.lem.1}, $\psi(P)$ and $\psi(Q)$ have different images by $\tau_{Z'}$. As they also have different images by $\zeta_{Z'}$ and as $\psi(P)$ is a facet of $\psi(Q)$, this contradicts Proposition \ref{BP.sec.3.prop.1.5}. As a consequence, all the faces of $Z+t_Z$ contained in $\zeta_Z^{-1}(\{F\})$ have the same image by $\psi\circ\zeta_{Z'}$. In other words, there exists a face $\theta(F)$ of $Z'$ such that $\psi$ sends $\zeta_Z^{-1}(\{F\})$ to a subset of $\zeta_{Z'}^{-1}(\{\theta(F)\})$. In fact, this subset is necessarily $\zeta_{Z'}^{-1}(\{\theta(F)\})$ itself. Indeed, observe that 
$$
\Bigl\{\zeta_Z^{-1}(\{F\}):F\in\mathcal{F}(Z)\Bigr\}
$$
is a partition of $\mathcal{F}(Z+t_Z)$. Similarly, 
$$
\Bigl\{\zeta_{Z'}^{-1}(\{F\}):F\in\mathcal{F}(Z')\Bigr\}
$$
is a partition of $\mathcal{F}(Z'+t_{Z'})$. As $\psi$ is a bijection, it must then send $\zeta_Z^{-1}(\{F\})$ precisely to $\zeta_{Z'}^{-1}(\{\theta(F)\})$. It follows that $\theta$ is a bijection from $\mathcal{F}(Z)$ to $\mathcal{F}(Z')$ such that $\theta\circ\zeta_Z$ is equal to $\zeta_{Z'}\circ\psi$, as desired.

Finally, if $G$ is a face of $Z$ and $F$ a face of $G$, then observe that some face of $Z+t_Z$ from $\zeta_Z^{-1}(\{F\})$ must be contained in a face from $\zeta_Z^{-1}(\{G\})$. Since $\zeta_{Z'}\circ\psi$ is a morphism, this shows that $\theta(F)$ is a face of $\theta(G)$. Hence $\theta$ is an isomorphism from the face lattice of $Z$ to that of $Z'$.
\end{proof}

The following is an immediate consequence of Lemma \ref{BP.sec.3.lem.1.5}.

\begin{lem}\label{BP.sec.3.lem.2}
Consider two $3$-dimensional zonotope $Z$ and $Z'$. If the Minkowski sums $Z+t_Z$ and $Z'+t_{Z'}$ have the same combinatorial type, then the zonotopes $Z$ and $Z'$ have the same combinatorial type.
\end{lem}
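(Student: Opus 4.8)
The plan is simply to unwind the definition of ``having the same combinatorial type'' and then invoke Lemma~\ref{BP.sec.3.lem.1.5}. Recall (from Section~\ref{BP.sec.1}) that two polytopes have the same combinatorial type precisely when there is an inclusion-preserving bijection between their face lattices. So the hypothesis of the lemma provides an isomorphism
$$
\psi:\mathcal{F}(Z+t_Z)\rightarrow\mathcal{F}(Z'+t_{Z'})\mbox{.}
$$

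First I would feed this $\psi$ into Lemma~\ref{BP.sec.3.lem.1.5}, which outputs an isomorphism $\theta:\mathcal{F}(Z)\rightarrow\mathcal{F}(Z')$ (the commutation relation $\theta\circ\zeta_Z=\zeta_{Z'}\circ\psi$ furnished by that lemma is more than is needed here; only the existence of the isomorphism $\theta$ matters). Since $\theta$ is by definition an inclusion-preserving bijection between the face lattices of $Z$ and $Z'$, these two zonotopes have the same combinatorial type, which is exactly the assertion to be proven.

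There is essentially no obstacle: the entire substantive content has already been extracted in Lemmas~\ref{BP.sec.3.lem.1} and~\ref{BP.sec.3.lem.1.5}, and the present statement is just the translation of that content back into the language of combinatorial types. The only point worth spelling out is that one may pass freely between ``having the same combinatorial type'' and ``having isomorphic face lattices'', which is precisely the definition recalled earlier; hence the proof will be a two-line deduction.
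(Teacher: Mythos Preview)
Your proposal is correct and matches the paper's approach exactly: the paper states this lemma as ``an immediate consequence of Lemma~\ref{BP.sec.3.lem.1.5}'' and gives no further argument, which is precisely the two-line deduction you describe.
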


Consider an odd integer $k$ greater than or equal to $9$ (under this assumption, there exist zonotopes with $(k-3)/2$ generators). It follows from Lemmas \ref{BP.sec.3.lem.0.5} and \ref{BP.sec.3.lem.2} that the number of different combinatorial types of $k$-equiprojective polytopes is at least the number of zonotopes with $(k-3)/2$ generators. Hence, Theorem \ref{BP.sec.0.thm.1} in the case of the $k$-equiprojective polytopes such that $k$ is odd follows from Theorem \ref{BP.sec.1.thm.2} and from the observations that
$$
\frac{k-3}{\mathrm{log}\,\frac{k-3}{2}}=O\!\left(\frac{k}{\mathrm{log}\,k}\right)
$$
and that
$$
\left(\frac{k-3}{2}\right)^{\!\frac{3}{2}(k-3)}=k^{k\left(\frac{3}{2}+O\left(\frac{1}{\mathrm{log}\,k}\right)\right)}
$$
as $k$ goes to infinity.

\section{Equiprojective polytopes and decomposability}\label{BP.sec.5}

Our results show that Minkowski sums allow for a sequential construction of equiprojective polytopes, thus providing a partial answer to Shephard's original question. Two of the possible kinds of summands in these Minkowski sums, line segments and polygons, are well understood. However, equiprojective polytopes can also appear as a summand, and one may ask what the primitive building blocks of these sequential Minkowski sum constructions look like. More precisely, recall that a polytope $P$ is \emph{decomposable} when it can be written as a Minkowski sum of two polytopes, none of which is homothetic to $P$ \cite{Kallay1982,PrzeslawskiYost2008,Shephard1963,Smilansky1987}. This notion appears naturally within the study of the deformation cones of polytopes \cite{Meyer1974} and in particular in the case of the submodular cone \cite{PilaudPadrolPoullot2021}. It also appears in the study of the diameter of polytopes \cite{DezaPournin2019}.

We ask the following question in the spirit of Shephard's.

\begin{qtn}\label{BP.sec.0.qtn.1}
Are there indecomposable equiprojective polytopes?
\end{qtn}

\begin{figure}[b]
\begin{centering}
\includegraphics[scale=1]{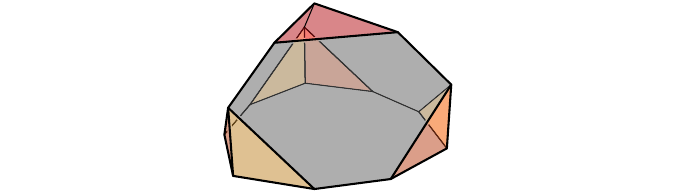}
\caption{The equitruncated tetrahedron from \cite{HasanHossainLopez-OrtizNusratQuaderRahman2022}.}\label{BP.sec.5.fig.1}
\end{centering}
\end{figure}

According to Lemma \ref{BP.sec.4.lem.2}, any $3$-dimensional polytope whose aggregated cones at all edge directions are planes or half-planes is necessarily equiprojective. These equiprojective polytopes form a natural superset of the zonotopes since the aggregated cones of $3$-dimensional zonotopes at their edge directions are planes. It should be noted that this superset of the zonotopes does not contain all equiprojective polytopes. For instance, the equitruncated tetrahedron described in \cite{HasanHossainLopez-OrtizNusratQuaderRahman2022} has an aggregated cone at an edge direction that is neither a plane nor a half-plane. This equiprojective polytope is shown in Figure \ref{BP.sec.5.fig.1}. It is obtained by cutting a tetrahedron $T$ along seven planes. The nonagonal face and the three hexagonal faces colored gray in the figure are what remains of the triangular faces of $T$. The seven triangular faces of the equitruncated tetrahedron each result from one of the cuts. Six of these triangular faces are incident to the nonagonal face and colored yellow and orange in Figure \ref{BP.sec.5.fig.1}. They all have a common edge direction orthogonal to the nonagon. Moreover, each of them shares a single edge direction with $T$. The seventh triangular face, colored red in the figure, is parallel to the nonagonal face. It is not hard to see that the aggregated cone of the equitruncated tetrahedron at the edge direction orthogonal to the nonagonal face is the union of three cones whose pairwise intersection is reduced to the origin of $\mathbb{R}^3$. In particular, this aggregated cone is distinct from both a plane and a half-plane. This is another example of a non-convex aggregated cone. One can also see on the figure that the aggregated cones of the equitruncated tetrahedron at all of its other edge directions are half-planes. Note that this polytope is decomposable and in particular, it admits a tetrahedron homothetic to $T$ as a summand.

Finally, observe that, if the aggregated cone of a $3$-dimensional polytope at some of its edge directions is a plane, then that polytope is decomposable (see for example \cite{DezaPournin2019}). However, the above question on polytope decomposability is open for the $3$-dimensional polytopes whose aggregated cones at all edge directions are half-planes. By the above, these polytopes form a subclass of the equiprojective polytopes disjoint from zonotopes.
 
\begin{qtn}\label{BP.sec.0.qtn.2}
Does there exist an indecomposable $3$-dimensional polytope whose aggregated cones at all edge directions are half-planes?
\end{qtn}

\noindent\textbf{Acknowledgements.} We are grateful to Xavier Goaoc for useful comments on an early version of this article and to anonymous referees for helpful suggestions and remarks. The PhD work of the first author has been partially funded by the MathSTIC (CNRS FR3734) research consortium.
\medskip

\bibliography{EquiprojectivePolytopes}
\bibliographystyle{ijmart}

\end{document}